\theoremstyle{plain}
\theoremstyle{definition}
\theoremstyle{remark}
\begin{document}

\title[The competition numbers of Hamming graphs]
{The competition numbers of Hamming graphs
with diameter at most three}

\author{Boram PARK}
\address{
Department of Mathematics Education\\
Seoul National University\\
Seoul 151-742, Korea}
\email{kawa22@snu.ac.kr}
\author{Yoshio SANO}
\address{
Pohang Mathematics Institute \\
Pohang University of Science and Technology \\
Pohang 790-784, Korea} 
\email{ysano@postech.ac.kr}
\thanks{
This work was supported by Priority Research Centers Program
through the National Research Foundation of Korea (NRF) funded by 
the Ministry of Education, Science and Technology (2009-0094069) 
and 
by the Korea Research Foundation Grant
funded by the Korean Government (MOEHRD)
(KRF-2008-531-C00004).
The first author was supported by Seoul Fellowship.}

\subjclass{Primary 05C99}
\keywords{competition graph, competition number,
edge clique cover, Hamming graph}

\begin{abstract}

The competition graph of a digraph $D$ is a graph which
has the same vertex set as $D$
and has an edge between $x$ and $y$ if and only if
there exists a vertex $v$ in $D$ such that
$(x,v)$ and $(y,v)$ are arcs of $D$.
For any graph $G$, $G$ together with sufficiently many isolated vertices
is the competition graph of some acyclic digraph.
The competition number $k(G)$ of a graph $G$
is defined to be the smallest number of such isolated vertices.
In general, it is hard to compute the competition number
$k(G)$ for a graph $G$ and it has been one of
important research problems in the study of
competition graphs.
In this paper, we compute the competition numbers
of Hamming graphs with diameter at most three.
\end{abstract}

\maketitle

\newtheorem{Thm}{Theorem}
\newtheorem{Lem}[Thm]{Lemma}
\newtheorem{Prop}[Thm]{Proposition}
\newtheorem{Cor}[Thm]{Corollary}
\newtheorem{Conj}[Thm]{Conjecture}
\newtheorem{Rem}[Thm]{Remark}

\newtheorem{Clm}{Claim}
\newtheorem*{Defi}{Definition}

\newcommand{\bA}{\ensuremath{\mathbb{A}}}
\newcommand{\bB}{\ensuremath{\mathbb{B}}}
\newcommand{\bC}{\ensuremath{\mathbb{C}}}
\newcommand{\bD}{\ensuremath{\mathbb{D}}}
\newcommand{\bE}{\ensuremath{\mathbb{E}}}
\newcommand{\bF}{\ensuremath{\mathbb{F}}}
\newcommand{\bG}{\ensuremath{\mathbb{G}}}
\newcommand{\bH}{\ensuremath{\mathbb{H}}}
\newcommand{\bI}{\ensuremath{\mathbb{I}}}
\newcommand{\bJ}{\ensuremath{\mathbb{J}}}
\newcommand{\bK}{\ensuremath{\mathbb{K}}}
\newcommand{\bL}{\ensuremath{\mathbb{L}}}
\newcommand{\bM}{\ensuremath{\mathbb{M}}}
\newcommand{\bN}{\ensuremath{\mathbb{N}}}
\newcommand{\bO}{\ensuremath{\mathbb{O}}}
\newcommand{\bP}{\ensuremath{\mathbb{P}}}
\newcommand{\bQ}{\ensuremath{\mathbb{Q}}}
\newcommand{\bR}{\ensuremath{\mathbb{R}}}
\newcommand{\bS}{\ensuremath{\mathbb{S}}}
\newcommand{\bT}{\ensuremath{\mathbb{T}}}
\newcommand{\bU}{\ensuremath{\mathbb{U}}}
\newcommand{\bV}{\ensuremath{\mathbb{V}}}
\newcommand{\bW}{\ensuremath{\mathbb{W}}}
\newcommand{\bX}{\ensuremath{\mathbb{X}}}
\newcommand{\bY}{\ensuremath{\mathbb{Y}}}
\newcommand{\bZ}{\ensuremath{\mathbb{Z}}}

\newcommand{\cA}{\ensuremath{\mathcal{A}}}
\newcommand{\cB}{\ensuremath{\mathcal{B}}}
\newcommand{\cC}{\ensuremath{\mathcal{C}}}
\newcommand{\cD}{\ensuremath{\mathcal{D}}}
\newcommand{\cE}{\ensuremath{\mathcal{E}}}
\newcommand{\cF}{\ensuremath{\mathcal{F}}}
\newcommand{\cG}{\ensuremath{\mathcal{G}}}
\newcommand{\cH}{\ensuremath{\mathcal{H}}}
\newcommand{\cI}{\ensuremath{\mathcal{I}}}
\newcommand{\cJ}{\ensuremath{\mathcal{J}}}
\newcommand{\cK}{\ensuremath{\mathcal{K}}}
\newcommand{\cL}{\ensuremath{\mathcal{L}}}
\newcommand{\cM}{\ensuremath{\mathcal{M}}}
\newcommand{\cN}{\ensuremath{\mathcal{N}}}
\newcommand{\cO}{\ensuremath{\mathcal{O}}}
\newcommand{\cP}{\ensuremath{\mathcal{P}}}
\newcommand{\cQ}{\ensuremath{\mathcal{Q}}}
\newcommand{\cR}{\ensuremath{\mathcal{R}}}
\newcommand{\cS}{\ensuremath{\mathcal{S}}}
\newcommand{\cT}{\ensuremath{\mathcal{T}}}
\newcommand{\cU}{\ensuremath{\mathcal{U}}}
\newcommand{\cV}{\ensuremath{\mathcal{V}}}
\newcommand{\cW}{\ensuremath{\mathcal{W}}}
\newcommand{\cX}{\ensuremath{\mathcal{X}}}
\newcommand{\cY}{\ensuremath{\mathcal{Y}}}
\newcommand{\cZ}{\ensuremath{\mathcal{Z}}}

\newcommand{\gA}{\ensuremath{\mathfrak{A}}}
\newcommand{\gB}{\ensuremath{\mathfrak{B}}}
\newcommand{\gC}{\ensuremath{\mathfrak{C}}}
\newcommand{\gD}{\ensuremath{\mathfrak{D}}}
\newcommand{\gE}{\ensuremath{\mathfrak{E}}}
\newcommand{\gF}{\ensuremath{\mathfrak{F}}}
\newcommand{\gG}{\ensuremath{\mathfrak{G}}}
\newcommand{\gH}{\ensuremath{\mathfrak{H}}}
\newcommand{\gI}{\ensuremath{\mathfrak{I}}}
\newcommand{\gJ}{\ensuremath{\mathfrak{J}}}
\newcommand{\gK}{\ensuremath{\mathfrak{K}}}
\newcommand{\gL}{\ensuremath{\mathfrak{L}}}
\newcommand{\gM}{\ensuremath{\mathfrak{M}}}
\newcommand{\gN}{\ensuremath{\mathfrak{N}}}
\newcommand{\gO}{\ensuremath{\mathfrak{O}}}
\newcommand{\gP}{\ensuremath{\mathfrak{P}}}
\newcommand{\gQ}{\ensuremath{\mathfrak{Q}}}
\newcommand{\gR}{\ensuremath{\mathfrak{R}}}
\newcommand{\gS}{\ensuremath{\mathfrak{S}}}
\newcommand{\gT}{\ensuremath{\mathfrak{T}}}
\newcommand{\gU}{\ensuremath{\mathfrak{U}}}
\newcommand{\gV}{\ensuremath{\mathfrak{V}}}
\newcommand{\gW}{\ensuremath{\mathfrak{W}}}
\newcommand{\gX}{\ensuremath{\mathfrak{X}}}
\newcommand{\gY}{\ensuremath{\mathfrak{Y}}}
\newcommand{\gZ}{\ensuremath{\mathfrak{Z}}}

\section{Introduction and Main Results}

The notion of a competition graph was introduced by Cohen \cite{Cohen1}
in connection with a problem in ecology
(see also \cite{Cohen2}).
The {\it competition graph} $C(D)$ of a digraph $D$ is
the (simple undirected) graph
which has the same vertex set
as $D$ and has an edge between vertices $u$ and $v$ if and only if
there is a vertex $x$ in $D$ such that $(u,x)$ and $(v,x)$ are arcs of $D$.
For any graph $G$, $G$ together
with sufficiently many isolated vertices is the competition graph of an
acyclic digraph.
Roberts \cite{MR0504054} defined the {\em competition number} $k(G)$ of
a graph $G$ to be the smallest number $k$ such that $G$ together with
$k$ isolated vertices is the competition graph of an acyclic digraph.
Opsut \cite{MR679638} showed that the computation of the
competition number of a graph is an NP-hard problem.
In the study of competition graphs,
it has been one of important research problems to
compute the competition numbers for various graph classes
(see \cite{MR2181040}, \cite{kimsu}, \cite{MR1833332}, \cite{MR2176262}, 
\cite{LKKS}, \cite{kr}, \cite{twoholes}, \cite{MR0504054}, \cite{WL} 
for graphs whose competition numbers are known). 
For some special graph families, we have explicit formulae
for computing competition numbers.
For example, if $G$ is a chordal graph without isolated vertices 
then $k(G)=1$,
and if $G$ is a nontrivial triangle-free connected graph 
then $k(G)=|E(G)|-|V(G)|+2$ (see \cite{MR0504054}).

Recently, it has been concerned to find the competition numbers of 
interesting graph families used in many areas 
of mathematics and computer science 
(see \cite{KPS}, \cite{KimSano}, \cite{PKS1}, \cite{PKS2}, \cite{regpoly}). 
Hamming graphs are known as an interesting graph family 
in connection with error-correcting codes, association schemes, 
and several branches of mathematics. 
For a positive integer $q$,
we denote the $q$-set $\{1,2 \ldots, q \}$ by $[q]$.
Also we denote the set of $n$-tuples over $[q]$ by $[q]^n$.
For positive integers $n$ and $q$,
the {\it Hamming graph} $H(n,q)$
is the graph which has the vertex set ${[q]^n}$ and
in which
two vertices $x=(x_1,x_2,\ldots,x_n)$ and $y=(y_1,y_2,\ldots,y_n)$ 
are adjacent if $d_H(x,y)=1$,
where $d_H:[q]^n \times [q]^n \to \bZ$ is
the {\it Hamming distance} defined by
\[
d_H(x,y):=|\{i \in [n] \mid x_i \neq y_i \}|.
\]
Note that the diameter of the Hamming graph
$H(n,q)$ is equal to $n$ if $q \geq 2$.
Since the Hamming graph $H(n,q)$ is an $n(q-1)$-regular graph 
with $q^n$ vertices,
it follows that the number of edges of
the Hamming graph $H(n,q)$ is equal to $\frac{1}{2} n(q-1)q^n$.

In this paper, we study the competition numbers of Hamming graphs.
If $q=1$, then $H(n,1)$ is $K_1$ and so the following holds:

\begin{Prop}
For $n \geq 1$, we have $k(H(n,1))=0$.
\end{Prop}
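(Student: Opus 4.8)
The plan is to exhibit directly an acyclic digraph whose competition graph is exactly $H(n,1)$, with no isolated vertices adjoined. First I would unpack what $H(n,1)$ is: when $q=1$ the vertex set $[1]^n$ consists of the single $n$-tuple $(1,1,\ldots,1)$, so $H(n,1)$ is the graph on one vertex with no edges, that is, $K_1$, exactly as the text observes. Thus it suffices to show $k(K_1)=0$.

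The key step is to take $D$ to be the digraph on this one vertex with no arcs. Since $D$ has no arcs it is vacuously acyclic, and for the same reason its competition graph $C(D)$ has no edges: by definition an edge of $C(D)$ requires a vertex $x$ that is a common out-neighbour of two distinct vertices, and no such $x$ exists when there are no arcs. Hence $C(D)=K_1=H(n,1)$, realized with zero additional isolated vertices. Because the competition number is by definition the smallest nonnegative integer $k$ for which such a realization exists, and we have achieved $k=0$, we conclude $k(H(n,1))=0$.

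I expect no real obstacle here: the entire content is the reduction $H(n,1)=K_1$ together with the remark that the empty digraph on one vertex realizes $K_1$ on the nose. Both are immediate from the definitions, so the ``hard part'' is merely making the identification $[1]^n=\{(1,\ldots,1)\}$ explicit and noting that $k(G)$ is bounded below by $0$, which forces the value once a realization with no isolated vertices is found.
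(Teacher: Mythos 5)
Your proof is correct and follows the same route as the paper, which simply observes that $H(n,1)=K_1$ and states the result without further argument; you merely make explicit the (trivial) realization of $K_1$ as the competition graph of the arcless digraph on one vertex. No issues.
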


\noindent
If $q=2$, then
since $H(n,2)$ triangle-free and connected,
we have
\begin{eqnarray*}
k(H(n,2)) &=& |E(H(n,2))|-|V(H(n,2))|+2 \\
&=& n 2^{n-1} -2^n +2 \\
&=& (n-2)2^{n-1} +2.
\end{eqnarray*}

\begin{Prop}\label{prop:kHn2}
For $n \geq 1$, we have $k(H(n,2))=(n-2)2^{n-1} +2$.
\end{Prop}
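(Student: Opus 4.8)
The plan is to apply the formula for triangle-free connected graphs quoted in the introduction, namely that $k(G)=|E(G)|-|V(G)|+2$ for any nontrivial triangle-free connected graph $G$. Thus it suffices to verify that $H(n,2)$ satisfies these three hypotheses for every $n \geq 1$ and then to substitute the known vertex and edge counts.

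First I would establish that $H(n,2)$ is bipartite, which immediately gives triangle-freeness. Partition the vertex set $[2]^n$ according to the parity of $\sum_{i=1}^{n} x_i$; since adjacent vertices differ in exactly one coordinate, every edge of $H(n,2)$ joins vertices of opposite parity, so this is a proper $2$-coloring. A bipartite graph contains no odd cycle, and in particular no triangle. Next I would check connectedness: given $x,y \in [2]^n$, flipping one at a time the coordinates on which $x$ and $y$ disagree produces a sequence of vertices in which consecutive ones are at Hamming distance $1$, hence a path in $H(n,2)$ from $x$ to $y$; this shows $H(n,2)$ is connected. Finally, for $n \geq 1$ the graph $H(n,2)$ has $2^n \geq 2$ vertices and at least one edge, so it is nontrivial.

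With the hypotheses in place, I would substitute $|V(H(n,2))| = 2^n$ and, from the edge-count formula $\frac{1}{2}n(q-1)q^n$ specialized to $q=2$, $|E(H(n,2))| = n 2^{n-1}$, to obtain
\[
k(H(n,2)) = n 2^{n-1} - 2^n + 2 = (n-2)2^{n-1} + 2,
\]
exactly as displayed just before the statement. There is essentially no hard step here: once the quoted formula is available, the entire content reduces to the routine verification that $H(n,2)$ is triangle-free and connected. The only point deserving care is the boundary case $n=1$, where $H(1,2)=K_2$; one checks directly that $K_2$ is nontrivial, triangle-free, and connected, so the formula still applies and gives $k(H(1,2)) = (1-2)2^{0}+2 = 1$, consistent with $K_2$ being chordal. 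Hence the stated value holds for all $n \geq 1$.
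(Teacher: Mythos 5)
Your proposal is correct and follows exactly the route the paper takes: it invokes the quoted formula $k(G)=|E(G)|-|V(G)|+2$ for nontrivial triangle-free connected graphs and substitutes $|V(H(n,2))|=2^n$ and $|E(H(n,2))|=n2^{n-1}$. The only difference is that you spell out the verification of triangle-freeness (via bipartiteness) and connectedness, which the paper treats as immediate; your check of the boundary case $n=1$ is a welcome extra point of care.
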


\noindent
If $n=1$, then $H(1,q)$ is
the complete graph $K_q$ with $q$ vertices
and so the following holds:

\begin{Prop}
For $q \geq 2$, we have $k(H(1,q))=1$.
\end{Prop}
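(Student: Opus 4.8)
The plan is to reduce the statement to the chordal-graph formula quoted earlier in the introduction. First I would observe that in $H(1,q)$ each vertex is a single entry of $[q]$, so any two distinct vertices $x,y \in [q]$ automatically satisfy $d_H(x,y)=1$; hence every pair of distinct vertices is adjacent and $H(1,q)$ is exactly the complete graph $K_q$. Since $q \geq 2$, this graph has at least one edge and, being complete, contains no induced cycle of length four or more, so $K_q$ is chordal and has no isolated vertex.

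Having made that identification, I would simply invoke the stated fact that a chordal graph without isolated vertices has competition number $1$. Applying it to $K_q$ yields $k(H(1,q)) = k(K_q) = 1$, which is the claim.

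If one prefers a self-contained argument rather than citing the chordal formula, I would establish the two inequalities directly. For the upper bound $k(K_q) \le 1$, I would introduce one new vertex $a$ and build the digraph $D$ on $[q] \cup \{a\}$ with arc set $\{(i,a) : i \in [q]\}$; then $a$ is the unique common prey of every pair $i \ne j$, so $C(D)$ is $K_q$ together with the single isolated vertex $a$, and $D$ is acyclic since all arcs point to $a$. For the lower bound $k(K_q) \ge 1$, I would note that every acyclic digraph has a vertex $t$ of out-degree zero; such a $t$ preys on nothing and is therefore isolated in its competition graph, so no acyclic digraph on the $q$ vertices alone can realize $K_q$, which has no isolated vertex when $q \ge 2$.

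The computation is entirely routine; the only point requiring any care is the identification $H(1,q)=K_q$ together with the verification that the hypotheses of the chordal-graph formula (chordality and absence of isolated vertices) are met. I do not expect any substantive obstacle.
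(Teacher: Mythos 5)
Your proposal is correct and matches the paper's approach: the paper simply notes that $H(1,q)$ is the complete graph $K_q$ and lets the conclusion follow from the standard fact (quoted in its introduction) that chordal graphs without isolated vertices have competition number $1$. Your additional self-contained verification of both bounds is sound but goes beyond what the paper records.
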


\noindent
However, in general, it is not easy to compute $k(H(n,q))$.
In this paper, we give the exact values of $k(H(2,q))$ and $k(H(3,q))$.
Our main results are the following:

\begin{Thm} \label{thm:H(2,q)}
For $q \geq 2$, we have $k(H(2,q)) =2$.
\end{Thm}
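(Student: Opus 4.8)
The plan is to prove the two inequalities $k(H(2,q))\ge 2$ and $k(H(2,q))\le 2$. It is convenient to view $H(2,q)$ on the $q\times q$ array $[q]^2$, where two cells are adjacent precisely when they agree in one coordinate, i.e.\ lie in a common row or column. Thus each row $R_a=\{(a,j):j\in[q]\}$ and each column $C_b=\{(i,b):i\in[q]\}$ induces a clique isomorphic to $K_q$, and these $2q$ cliques form an edge clique cover of $H(2,q)$, every edge lying in exactly one of them. Moreover, for a vertex $v=(a,b)$ the neighborhood $N_{H(2,q)}(v)$ is the disjoint union of the punctured row $\{(a,j):j\ne b\}$ and the punctured column $\{(i,b):i\ne a\}$, two cliques on $q-1$ vertices with no edge between them. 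Since $H(2,2)$ is exactly the case $n=2$ of Proposition~\ref{prop:kHn2}, which already gives $k(H(2,2))=2$, I may assume $q\ge 3$.

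For the lower bound I would apply Opsut's bound $k(G)\ge\min_{v\in V(G)}\theta_E(N_G(v))$ from \cite{MR679638}, where $\theta_E(\cdot)$ is the edge clique cover number. By the description above, $N_{H(2,q)}(v)$ induces two disjoint cliques, each of which has an edge because $q\ge 3$; no clique can meet both parts, so covering the edges requires one clique per part, giving $\theta_E(N_{H(2,q)}(v))=2$ for every $v$. Hence $k(H(2,q))\ge 2$. The mechanism behind this bound, which I would spell out if a self-contained argument is preferred, is that in any acyclic digraph $D$ realizing $C(D)=H(2,q)\cup I_k$ (that is, $H(2,q)$ together with $k$ isolated vertices) the non-isolated vertex $u$ appearing earliest in an ordering of $V(D)$ in which each predator follows all of its prey has all of its prey among the isolated vertices, and the in-neighborhoods of those prey are cliques whose union is $N_{H(2,q)}(u)$; covering two disjoint nonempty cliques needs at least two of them, so $u$ has at least two isolated prey.

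For the upper bound I would construct an acyclic digraph $D$ with $C(D)=H(2,q)\cup\{z_1,z_2\}$ by realizing each of the $2q$ row- and column-cliques as the in-neighborhood of a suitably chosen prey. List the array vertices in row-major order $(1,1),(1,2),\ldots,(1,q),(2,1),\ldots,(q,q)$ and append $z_1,z_2$ at the end. Assign prey as follows: $R_i$ preys on $(i+1,1)$ for $i<q$ and $R_q$ preys on $z_1$; $C_j$ preys on $(q,j+1)$ for $j<q$ and $C_q$ preys on $z_2$; then draw an arc from each member of a clique to its assigned prey. The row-prey all lie in the first column and the column-prey all lie in the last row, so the $2q$ prey are pairwise distinct; every arc goes from an earlier to a later vertex in the chosen order, so $D$ is acyclic; and each prey has exactly one clique as its in-neighborhood, so the edges of $C(D)$ are exactly the union of the row- and column-cliques, namely $E(H(2,q))$, while $z_1,z_2$ are isolated. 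This yields $k(H(2,q))\le 2$, and together with the lower bound it gives $k(H(2,q))=2$.

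The routine verifications — distinctness of the prey, acyclicity, and the absence of spurious edges — are mechanical once the order and the assignment are fixed. The step I would treat most carefully is the design of the assignment so that only two cliques are forced onto new vertices: the row-major order makes each row a contiguous block that can be fed into the start of the next block, and it turns the final row into a reservoir of late vertices able to absorb all but the last column, so that precisely the last row and the last column overflow into $z_1$ and $z_2$.
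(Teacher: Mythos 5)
Your proposal is correct and follows essentially the same route as the paper: the lower bound comes from Opsut's local bound applied to the neighborhood of a vertex, which induces $K_{q-1}\cup K_{q-1}$, and the upper bound comes from an explicit acyclic digraph assigning one distinct prey to each of the $2q$ maximal row/column cliques so that exactly two cliques overflow onto the new vertices $z_1,z_2$; your assignment is just a transposed variant of the paper's, and your verification of distinctness and acyclicity is sound. One correction, though: Opsut's bound (Theorem \ref{thm:OpsutBd1}) uses the \emph{vertex} clique cover number, $k(G)\ge \min_{v}\theta_V(N_G(v))$, not $\theta_E$; the $\theta_E$ version you quote is false in general (for the octahedron $K_{2,2,2}$ every open neighborhood induces $C_4$, so it would give $k\ge 4$, whereas $k(K_{2,2,2})=2$). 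This does not break your proof, because the ``mechanism'' you spell out is precisely the $\theta_V$ argument --- the in-neighborhoods of the prey of the earliest non-added vertex are cliques covering the vertices of its neighborhood, and a disjoint union of two nonempty cliques needs two such cliques --- and here $\theta_V\bigl(K_{q-1}\cup K_{q-1}\bigr)=2$ as well. Using $\theta_V$ also handles $q=2$ uniformly (since $\theta_V(I_2)=2$), so the separate appeal to Proposition \ref{prop:kHn2} becomes unnecessary.
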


\begin{Thm}\label{thm:H(3,q)}
For $q \geq 3$, we have $k(H(3,q)) =6$.
\end{Thm}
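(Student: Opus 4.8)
The plan is to convert the statement into a question about edge clique covers carried by an acyclic ordering, exploiting two structural features of $H(3,q)$: every clique is contained in a unique \emph{line} (a copy of $K_q$ obtained by fixing two of the three coordinates), and the $3q^2$ lines partition the edge set. Concretely, if $D$ is an acyclic digraph with $C(D)=H(3,q)\cup I_k$, I would topologically order its $q^3+k$ vertices $w_1,\dots,w_m$ so that every arc points to an earlier vertex; then each in-neighborhood $N_D^-(w_i)$ is a clique of $H(3,q)$ contained in $\{w_{i+1},\dots,w_m\}$, and these cliques form an edge clique cover. Conversely such data rebuild $D$. Writing $c_j$ for the number of cover-cliques whose earliest (in the order) vertex lies in the prefix $P_j=\{w_1,\dots,w_j\}$, a Hall-type count shows that at least $c_j-(j-1)$ cliques must be absorbed by the added vertices, so
\[
k(H(3,q))\ \ge\ \max_{j}\,(c_j-j+1),
\]
and conversely a good ordering together with a good cover realizes the bound via Hall's theorem.

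For the lower bound I would show $c_j$ is large. Each line meeting $P_j$ has its globally earliest vertex inside $P_j$, and the edges at that vertex force a cover-clique lying in that line whose earliest vertex is in $P_j$; distinct lines give distinct cliques, so $c_j\ge\lambda_j$, where $\lambda_j=|\pi_1(P_j)|+|\pi_2(P_j)|+|\pi_3(P_j)|$ is the number of lines meeting $P_j$ ($\pi_i$ deleting the $i$-th coordinate). It then suffices to prove that any five vertices of $H(3,q)$ lie on at least $10$ lines, i.e.\ that their three projection sizes cannot sum to $9$. Since $|\pi_1||\pi_2||\pi_3|\ge|P_5|^{2}=25$ by the Loomis--Whitney inequality, sizes summing to $9$ would have to be $(3,3,3)$, and a short analysis of the fibers over $\pi_3$ (splitting $5=3+1+1$ or $5=2+2+1$) rules this out. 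Taking $j=5$ then gives $k\ge c_5-4\ge\lambda_5-4\ge 6$.

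For the upper bound I would exhibit an ordering of $[q]^3$ with $\lambda_j-j\le 5$ at every prefix and take the cover by all $3q^2$ lines. With this cover $c_j=\lambda_j$, so Hall's condition for assigning each line to an earlier ``prey'' vertex, with $6$ extra vertices placed first, is exactly $\lambda_j\le(j-1)+6$; hence $6$ isolated vertices suffice. Since adding $w_j$ changes $\lambda_j-j$ by $2-h_j$, where $h_j$ is the number of lines through $w_j$ already occupied, the task is to keep $h_j\ge 2$ after a short initial segment. The ordering I would use grows a nested box
\[
\{1,2\}^3\ \subseteq\ \{1,2\}^2\times[q]\ \subseteq\ [q]\times\{1,2\}\times[q]\ \subseteq\ [q]^3,
\]
filling each enlargement one coordinate-slice at a time; a direct check shows the running value of $\lambda_j-j$ rises to exactly $5$ and never beyond.

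The main obstacle is this construction. Naive orderings fail badly: filling a single coordinate-plane first already drives $\lambda_j-j$ up to about $2q$, because the $q^2$ lines in the third direction get ``opened'' without being paid for, and a Latin-square arrangement (which makes all later vertices have $h_j=3$) is an independent set whose own insertion costs $2$ per vertex. Pinning the quantity at the optimal value $5$ forces an ordering in which almost every new vertex already meets two occupied lines; the delicate point is to verify the nested-box order does this uniformly in $q$, including the tight case $q=3$, where the average increment of $\lambda_j-j$ is exactly $0$ (average $h_j=2$) so the budget of $5$ has essentially no slack.
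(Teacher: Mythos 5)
Your proposal is correct, and both halves take a genuinely different route from the paper's, while resting on the same two structural facts: every clique of size at least two lies in a unique line (the paper's Lemma~\ref{lem:LB2}), and an acyclic ordering forces the prefix bound $|\{S\in\cF(3,q) : S\cap U_j\neq\emptyset\}|\le k+j-1$ (the paper's Lemma~\ref{lem:LH_3}), which is exactly your $k\ge c_j-j+1$ together with $c_j\ge\lambda_j$. For the lower bound the paper argues in two stages: a case analysis over the ten possible induced subgraphs on $U_4$ shows the prefix bound forces $U_4\cong C_4$ with exactly $8$ incident lines, and a separate contradiction is then extracted from the possible positions of $v_5$. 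You instead prove the single clean statement that any five vertices of $H(3,q)$ meet at least ten lines, via discrete Loomis--Whitney ($25\le|\pi_1(P)|\,|\pi_2(P)|\,|\pi_3(P)|$ forces projection sizes summing to $9$ to be $(3,3,3)$) plus a fiber analysis; I checked both the $3{+}1{+}1$ and $2{+}2{+}1$ splits and each does yield the contradiction, so $\lambda_5\ge 10$ and $k\ge 6$ in one step. This is more conceptual and shorter than the paper's case analysis, at the price of importing one classical inequality. For the upper bound the paper builds the digraph explicitly by induction on $q_1+q_2+q_3$ for $K_{q_1}\Box K_{q_2}\Box K_{q_3}$, peeling off one $q_2\times q_3$ layer at a time; you instead run the standard Hall/greedy assignment of lines to earlier prey against an ordering whose potential $\lambda_j-j$ never exceeds $5$. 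Your nested-box ordering does achieve this: the potential reaches $5$ at the fifth vertex of $\{1,2\}^3$, oscillates between $4$ and $5$ while the $2\times2\times q$ box is completed, and every later slice raises it by at most one from a starting value of at most $4$ (I verified the increments slice by slice, including the tight case $q=3$). Both approaches in fact cover all of $K_{q_1}\Box K_{q_2}\Box K_{q_3}$. The only items you must still write out in full are the $(3,3,3)$ fiber analysis and the slice-by-slice bookkeeping for the ordering; as sketched, neither conceals a gap.
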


We use the following notation and terminology in this paper.
For a digraph $D$,
a sequence $v_1, v_2, \ldots, v_n$ of the vertices of $D$
is called an \emph{acyclic ordering} of $D$
if $(v_i,v_j) \in A(D)$ implies $i>j$.
It is well-known that a digraph $D$ is acyclic
if and only if there exists an acyclic ordering of $D$.
For a digraph $D$ and a vertex $v$ of $D$,
we define the {\it out-neighborhood}
$N^{+}_D(v)$ of $v$ in $D$ to be the set
$\{w \in V(D) \mid (v,w) \in A(D)\}$, and
the {\it in-neighborhood} $N^{-}_D(v)$
of $v$ in $D$ to be the set $\{w \in V(D) \mid (w,v)\in A(D)\}$.
A vertex in the out-neighborhood $N^{+}_D(v)$ of a vertex $v$
in a digraph $D$ is called a {\it prey} of $v$ in $D$.
For a graph $G$ and a vertex $v$ of $G$,
we define the {\it open neighborhood} $N_G(v)$ of $v$ in $G$
to be the set $\{u \in V(G) \mid uv \in E(G) \}$,
and the {\it closed neighborhood} $N_G[v]$ of $v$ in $G$
to be the set $N_G(v) \cup \{v\}$.
We denote the subgraph of $G$ induced by
$N_G(v)$ (resp. $N_G[v]$) by the same symbol $N_G(v)$ (resp. $N_G[v]$).

For a clique $S$ of a graph $G$
and an edge $e$ of $G$,
we say {\it $e$ is covered by $S$}
if both of the endpoints of $e$ are contained in $S$.
An \textit{edge clique cover} of a graph $G$
is a family of cliques of $G$ such that
each edge of $G$ is covered by some clique in the family.
The {\it edge clique cover number} $\theta_E(G)$ of a graph $G$
is the minimum size of an edge clique cover of $G$.
An edge clique cover of $G$
is called a {\it minimum edge clique cover} of $G$
if its size is equal to $\theta_E(G)$.
A \textit{vertex clique cover} of a graph $G$
is a family of cliques of $G$ such that each vertex of $G$
is contained in some clique in the family.
The smallest size of a vertex clique cover of $G$ is called
the {\it vertex clique cover number}, and is denoted by $\theta_V(G)$.

We denote a path with $n$ vertices by $P_n$,
a cycle with $n$ vertices by $C_n$, a graph with $n$ vertices
and no edges by $I_n$, and
a complete multipartite graph by $K_{n_1, \ldots, n_m}$.

\section{Proofs of Theorems \ref{thm:H(2,q)} and \ref{thm:H(3,q)}}
\subsection{Cliques in a Hamming graph}

Let $\pi_j:[q]^n \to [q]^{n-1}$ be a map defined by
\[
(x_1, ..., x_{j-1}, x_j, x_{j+1}, ..., x_n) \mapsto
(x_1, ..., x_{j-1}, x_{j+1}, ..., x_n).
\]
For $j \in [n]$ and $p \in [q]^{n-1}$,
let
\begin{equation}\label{eq:Sjp}
S_j(p) := \pi^{-1}_{j}(p) = \{x \in [q]^n \mid \pi_j(x)= p \}.
\end{equation}
Then $S_j(p)$ is a clique of $H(n,q)$ with size $q$.
Let
\begin{equation}\label{eq:Fnq}
\cF(n,q):=\{S_j(p) \mid j \in [n], p \in [q]^{n-1} \}.
\end{equation}
Then $\cF(n,q)$ is the family of maximal cliques of $H(n,q)$.

\begin{Lem}\label{lem:LB2}
Let $n\ge 2$ and $q \ge 2$,
and let $K$ be a clique of $H(n,q)$ with size at least $2$.
Then there exists a unique maximal clique $S$ of $H(n,q)$
containing $K$.
\end{Lem}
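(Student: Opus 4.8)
The plan is to establish existence and uniqueness separately, using the description of the maximal cliques $\cF(n,q)$ given above. Since $K$ has size at least $2$, I would begin by fixing two distinct vertices $x,y \in K$. As $x$ and $y$ are adjacent in $H(n,q)$, we have $d_H(x,y)=1$, so there is a unique coordinate $j \in [n]$ with $x_j \neq y_j$, while $x_i = y_i$ for all $i \neq j$. Setting $p := \pi_j(x) = \pi_j(y) \in [q]^{n-1}$, the candidate maximal clique will be $S_j(p)$.

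For existence, I would show $K \subseteq S_j(p)$, that is, every $z \in K$ satisfies $z_i = x_i$ for all $i \neq j$. This is immediate for $x$ and $y$, so take any $z \in K$ with $z \notin \{x,y\}$. Since $z$ is adjacent to both $x$ and $y$, we have $d_H(z,x)=1$ and $d_H(z,y)=1$. Suppose for contradiction that $z$ differs from $x$ in some coordinate $k \neq j$; because $d_H(z,x)=1$, this forces $z_i = x_i$ for all $i \neq k$, and in particular $z_j = x_j \neq y_j$. Comparing $z$ with $y$, the coordinates $j$ and $k$ then both contribute to their disagreement ($z_j \neq y_j$ and $z_k \neq x_k = y_k$), so $d_H(z,y) \geq 2$, a contradiction. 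Hence $z$ agrees with $x$ off coordinate $j$, giving $\pi_j(z) = p$ and thus $z \in S_j(p)$.

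For uniqueness, I would use that any maximal clique of $H(n,q)$ is of the form $S_{j'}(p')$ for some $j' \in [n]$ and $p' \in [q]^{n-1}$. If $K \subseteq S_{j'}(p')$, then in particular $x,y \in S_{j'}(p')$, so $x$ and $y$ agree on every coordinate other than $j'$. Since $x$ and $y$ differ precisely in coordinate $j$, this forces $j' = j$, and then $p' = \pi_{j'}(x) = \pi_j(x) = p$. Therefore $S_{j'}(p') = S_j(p)$, which shows that $S_j(p)$ is the unique maximal clique containing $K$.

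The only delicate point is the case analysis in the existence step: one must rule out that a third vertex of $K$ introduces a new direction of disagreement, and this is exactly where the hypothesis that the vertices of $K$ are pairwise at Hamming distance $1$ is used in an essential way. Everything else is routine bookkeeping with the projection maps $\pi_j$.
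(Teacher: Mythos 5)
Your proof is correct and follows essentially the same route as the paper: both identify the coordinate $j$ in which two fixed vertices $x,y\in K$ differ, rule out by a pairwise-adjacency contradiction that a third vertex of $K$ could introduce a different coordinate of disagreement, and then deduce uniqueness from the classification of maximal cliques as the sets $S_j(p)$. The only cosmetic difference is that you phrase the contradiction as $d_H(z,y)\ge 2$ while the paper derives $x_j=z_j=y_j$ against $x_j\neq y_j$; the substance is identical.
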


\begin{proof}
Since $\cF(n,q)$ is the family of maximal cliques of $H(n,q)$,
it is sufficient to show that
there is a unique maximal clique in $\cF(n,q)$ containing $K$.

Take a vertex $x = (x_1, x_2 \ldots, x_n) \in K$.
Now we will show that there exists a unique integer $j$
such that $\pi_j(x)=\pi_j(y)$ for all vertices $y\in K\setminus\{x\}$.
Take a vertex $y = ( y_1, y_2, \ldots, y_n ) \in K \setminus \{ x \}$.
Since $K$ is a clique,
$x$ and $y$ are adjacent.
Then there is a unique integer $j\in [n]$ such that
$x_j \neq y_j$ and $\pi_j(x)=\pi_j(y)$.
Suppose that there is a vertex
$z=(z_1, z_2, \ldots, z_n) \in K \setminus \{x,y\}$
such that $\pi_j(z)\not=\pi_j(x)$.
Since $x$ and $z$ are adjacent,
there is $j_1 \in [n]$ with $j_1 \neq j$
such that $\pi_{j_1}(x)=\pi_{j_1}(z)$,
and thus $x_j = z_j$.
Since $y$ and $z$ are adjacent,
there is $j_2 \in [n]$ with $j_2 \neq j$
such that
$\pi_{j_2}(y)=\pi_{j_2}(z)$,
and thus $y_j=z_j$.
Thus we have $x_j=z_j=y_j$,
which contradicts to the fact that $x_j\not=y_j$.
Therefore, $\pi_j(z)=\pi_j(x)$.

It implies that  $j$ is the unique integer
such that $\pi_j(x)=\pi_j(y)$ for all $y \in K$.
Hence $K$ is contained in $S_j(  \pi_j(x) ) \in \cF(n,q)$.
From the uniqueness of $j \in [n]$ and the fact that
$\pi_j(x)$ does not depend on the choice of $x \in K$,
it follows that $S_j(  \pi_j(x) )$ is
the unique maximal clique containing $K$.
\end{proof}

\begin{Lem}\label{lem:ecc}
The following hold: 
\begin{itemize}
\item[{\rm (a)}]
The family $\cF(n,q)$ defined by
(\ref{eq:Fnq})
is an edge clique cover of $H(n,q)$.
\item[{\rm (b)}]
The edge clique cover number of $H(n,q)$ is equal to $nq^{n-1}$.
\item[{\rm (c)}]
Any minimum edge clique cover of $H(n,q)$ consists of
edge disjoint maximum cliques.
\end{itemize}
\end{Lem}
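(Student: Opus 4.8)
The plan is to treat the three parts in order, using Lemma \ref{lem:LB2} as the engine for the lower bound and a double-counting argument to force the extremal structure. Part (a) is essentially immediate from the definition of adjacency: if $xy$ is an edge of $H(n,q)$, then $d_H(x,y)=1$, so $x$ and $y$ differ in exactly one coordinate, say the $j$-th, which means $\pi_j(x)=\pi_j(y)=:p$ and hence both $x,y \in S_j(p) \in \cF(n,q)$. Thus $\cF(n,q)$ covers every edge, and in particular $\theta_E(H(n,q)) \le |\cF(n,q)| \le nq^{n-1}$, giving the upper bound for part (b).

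The substance lies in the matching lower bound, and here is where I expect the main work. The key observation I would extract from Lemma \ref{lem:LB2} is that an edge $e=xy$ is itself a clique of size $2$, so it is contained in a \emph{unique} maximal clique of $H(n,q)$; consequently the members of $\cF(n,q)$ partition the edge set of $H(n,q)$ into edge-disjoint blocks. More importantly, any clique $K$ of $H(n,q)$ with $|K|\ge 2$ lies inside a single maximal clique, so every edge covered by $K$ lies in one maximal clique, and since each maximal clique $S_j(p)$ has size exactly $q$, a single clique covers at most $\frac{1}{2}q(q-1)$ edges. Given that the total number of edges of $H(n,q)$ equals $\frac{1}{2}n(q-1)q^{n}$, any edge clique cover must use at least
\[
\frac{\tfrac{1}{2}n(q-1)q^{n}}{\tfrac{1}{2}q(q-1)} = nq^{n-1}
\]
cliques. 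Combined with the upper bound, this yields $\theta_E(H(n,q)) = nq^{n-1}$, proving part (b).

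For part (c) I would exploit the tightness of this counting. Let $\mathcal{C}=\{K_1,\dots,K_m\}$ be a minimum edge clique cover, so $m=nq^{n-1}$, and write $s_i=|K_i|$. Since each $K_i$ covers at most $\frac{1}{2}q(q-1)$ edges and the union must cover all $\frac{1}{2}n(q-1)q^{n}=m\cdot\frac{1}{2}q(q-1)$ edges, the chain of inequalities
\[
m \cdot \tfrac{1}{2}q(q-1) \;\ge\; \sum_{i=1}^{m} \tfrac{1}{2}s_i(s_i-1) \;\ge\; |E(H(n,q))| \;=\; m \cdot \tfrac{1}{2}q(q-1)
\]
must hold with equality throughout. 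Equality on the left forces $s_i=q$ for every $i$, so each $K_i$ is a maximum clique; equality on the right forces the cliques to cover the edges with no repetition, i.e.\ to be pairwise edge disjoint. This gives exactly the assertion of (c).

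The conceptual hurdle is entirely in part (b): recognizing that Lemma \ref{lem:LB2} upgrades ``each edge lies in some maximal clique'' to ``each clique of size at least two is confined to one maximal clique of size $q$,'' which is what caps the per-clique edge contribution at $\frac{1}{2}q(q-1)$ and makes the counting bound sharp. Once this confinement is in hand, parts (b) and (c) are a single uniform double count, and part (a) requires no more than the adjacency definition.
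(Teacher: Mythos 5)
Your proposal is correct and follows essentially the same route as the paper: the upper bound comes from exhibiting $\cF(n,q)$ as an edge clique cover of size $nq^{n-1}$, and the lower bound plus part (c) come from the same double count, $|E(H(n,q))| \le \sum_{S}|E(S)| \le \binom{q}{2}|\cE|$, with the two equality conditions forcing maximum cliques and edge-disjointness. The only cosmetic difference is that you route the bound $|S|\le q$ through Lemma \ref{lem:LB2}, whereas the paper just cites the maximum clique size directly.
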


\begin{proof}
Since each edge is contained in a maximal clique
and $\cF(n.q)$ is the family of maximal cliques in $H(n,q)$,
it follows that
$\cF(n,q)$ is an edge clique cover of $H(n,q)$.

Let $\cE$ be a minimum edge clique cover of $H(n,q)$,
that is, $\theta_E(H(n,q))=|\cE|$.
Since $\cF(n,q)$ is an edge clique cover with $|\cF(n,q)| =nq^{n-1}$,
we have $|\cE| \leq  nq^{n-1}$.
Now we will show that $|\cE| \geq nq^{n-1}$.
For a clique $S$, let $E(S):={S \choose 2}$.
Since $\cE$ is an edge clique cover of $H(n,q)$, it holds that
\begin{equation}\label{eq:minecc1}
|E(H(n,q))| \ \leq \ \sum_{S \in \cE} |E(S)|,
\end{equation}
and the equality holds if and only if
none of two distinct cliques in $\cE$ have a common edge.
Since the maximum size of a clique of $H(n,q)$ is equal to $q$,
we have $|E(S)|\le {q \choose 2}$ for each ${S\in \cE}$.
Therefore,
\begin{equation}\label{eq:minecc2}
\sum_{S \in \cE} |E(S)| \ \leq \ {q \choose 2} \times |\cE|,
\end{equation}
and the equality holds if and only if
any element of $\cE$ is a maximum clique in $H(n,q)$.
Since $|E(H(n,q))|= \frac{1}{2} n(q-1)q^n
= {q \choose 2} \times n q^{n-1}$,
it follows from (\ref{eq:minecc1})
and (\ref{eq:minecc2}) that
$nq^{n-1}\leq |\cE|$, or $nq^{n-1}=|\cE|$.

Moreover, since two equalities of (\ref{eq:minecc1})
and (\ref{eq:minecc2}) hold,
we can conclude that
any minimum edge clique cover of $H(n,q)$ consists of
edge disjoint maximum cliques.
\end{proof}

\begin{Cor}
The family $\cF(n,q)$ defined by
(\ref{eq:Fnq})
is a minimum edge clique cover of $H(n,q)$.
\end{Cor}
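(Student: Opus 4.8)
The plan is to read this off directly from Lemma~\ref{lem:ecc}. By part~(a) of that lemma, the family $\cF(n,q)$ is already an edge clique cover of $H(n,q)$, so the only thing left to verify is that its cardinality attains the edge clique cover number $\theta_E(H(n,q))$.

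First I would compute $|\cF(n,q)|$. By the definition (\ref{eq:Fnq}), the members of $\cF(n,q)$ are indexed by pairs $(j,p)$ with $j \in [n]$ and $p \in [q]^{n-1}$, which gives at most $n q^{n-1}$ cliques. To see that this count is exact, I would check that the assignment $(j,p) \mapsto S_j(p)$ is injective: for a fixed direction $j$, distinct values of $p$ yield disjoint cliques $S_j(p)$, while for $j \neq j'$ the cliques $S_j(p)$ and $S_{j'}(p')$ vary in different coordinates and hence share at most one vertex, so they cannot coincide when $q \geq 2$. Thus $|\cF(n,q)| = n q^{n-1}$; in fact this count is already used inside the proof of Lemma~\ref{lem:ecc}.

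Next I would invoke Lemma~\ref{lem:ecc}(b), which asserts $\theta_E(H(n,q)) = n q^{n-1}$. Since $\cF(n,q)$ is an edge clique cover whose size equals this minimum possible value, the definition of a minimum edge clique cover is met, and therefore $\cF(n,q)$ is a minimum edge clique cover of $H(n,q)$.

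There is essentially no obstacle here: the corollary is a bookkeeping consequence of the preceding lemma, whose genuine content—the lower bound $\theta_E(H(n,q)) \geq n q^{n-1}$ together with the edge count $|E(H(n,q))| = \binom{q}{2} n q^{n-1}$—has already been established. The only point deserving a moment of care is the injectivity of the indexing, needed so that $|\cF(n,q)|$ is \emph{exactly} $n q^{n-1}$ rather than merely bounded above by it.
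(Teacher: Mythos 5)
Your proposal is correct and follows the same route as the paper: the paper's proof likewise just combines the fact that $|\cF(n,q)| = n q^{n-1}$ with Lemma~\ref{lem:ecc}. Your additional remark on the injectivity of the indexing $(j,p)\mapsto S_j(p)$ is a reasonable extra check but does not change the argument.
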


\begin{proof}
It follows from the fact that $|\cF(n,q)| =n q^{n-1}$ and
Lemma \ref{lem:ecc}.
\end{proof}

\subsection{Proof of Theorem \ref{thm:H(2,q)}}

In this subsection,
we give
a proof of Theorem \ref{thm:H(2,q)}.

\begin{Lem}\label{lem:locvcc}
Let $n\ge 2$ and $q \ge 2$.
For any vertex $x$ of $H(n,q)$,
we have
$\theta_V(N_{H(n,q)}(x))=n$.
\end{Lem}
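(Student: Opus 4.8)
The plan is to determine the isomorphism type of the induced subgraph $N_{H(n,q)}(x)$ explicitly and then read off its vertex clique cover number. Fix a vertex $x=(x_1,\ldots,x_n)$. Its neighbors are precisely the vertices obtained from $x$ by changing exactly one coordinate: for each $j\in[n]$ and each $a\in[q]\setminus\{x_j\}$ there is a neighbor agreeing with $x$ in every coordinate except the $j$-th, where it takes the value $a$. Thus $x$ has $n(q-1)$ neighbors, which I would partition into $n$ blocks $B_1,\ldots,B_n$, where $B_j$ consists of the $q-1$ neighbors differing from $x$ in coordinate $j$.

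The first key step is to describe the adjacencies inside $N_{H(n,q)}(x)$. Given two neighbors $y\in B_j$ and $z\in B_k$, I would simply compute $d_H(y,z)$: if $j=k$ they differ only in coordinate $j$, so $d_H(y,z)=1$ and they are adjacent; if $j\neq k$ they differ in both coordinates $j$ and $k$, so $d_H(y,z)=2$ and they are non-adjacent. Hence each block $B_j$ is a clique isomorphic to $K_{q-1}$, and there are no edges between distinct blocks. Equivalently, one observes that $B_j\cup\{x\}=S_j(\pi_j(x))$, so each block is exactly what the maximal clique $S_j(\pi_j(x))\in\cF(n,q)$ contributes to the neighborhood, and by Lemma \ref{lem:LB2} these are the only maximal cliques through $x$. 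In either formulation the conclusion is that $N_{H(n,q)}(x)$ is the disjoint union of the $n$ cliques $B_1,\ldots,B_n$.

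Once this structure is in hand, the value of $\theta_V$ follows. For the upper bound, the family $\{B_1,\ldots,B_n\}$ is itself a vertex clique cover consisting of $n$ cliques, so $\theta_V(N_{H(n,q)}(x))\le n$. For the lower bound, any clique of $N_{H(n,q)}(x)$ is a set of pairwise adjacent vertices and must therefore lie within a single block $B_j$, since vertices in different blocks are non-adjacent; consequently a single clique can meet only one of the $n$ pairwise disjoint blocks, and covering all $n$ blocks requires at least $n$ cliques. Combining the two bounds yields $\theta_V(N_{H(n,q)}(x))=n$.

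The argument is essentially a structural identification, so I do not expect a deep obstacle; the only point requiring care is the adjacency computation for two neighbors lying in different blocks, where one must confirm that the separation into distance $2$ holds uniformly for all $q\ge 2$. In particular, when $q=2$ each block $B_j$ is a single vertex, so $N_{H(n,q)}(x)$ reduces to $n$ isolated vertices and the count $\theta_V=n$ still holds, which is a useful consistency check for the extremal case.
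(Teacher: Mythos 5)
Your proof is correct and follows essentially the same route as the paper: the blocks $B_j$ are exactly the sets $S_j(\pi_j(x))\setminus\{x\}$ that the paper uses as its vertex clique cover, and your lower bound (each clique meets only one block because vertices in distinct blocks are at Hamming distance $2$) is the same observation the paper makes by choosing one representative per clique. Your explicit computation of $d_H(y,z)=2$ for neighbors in different blocks is a slightly more self-contained justification of the non-adjacency than the paper's, but the argument is the same.
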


\begin{proof}
Take any vertex $x \in {[q]^n}$ of $H(n,q)$.
Then the vertex $x$ is adjacent to a vertex $y$ such that
$\pi_j(x) = \pi_j(y)$ for some $j\in [n]$.
We can easily check from the definition of $H(n,q)$ that,
for any $j \in [n]$,
the set $S_j(  \pi_j(x) ):=\{y \in [q]^n \mid \pi_j(x) = \pi_j(y) \}$
forms a clique of $H(n,q)$.
Since $N_{H(n,q)}(x)=\cup_{j \in [n]} S_j( \pi_j(x) ) \setminus \{x\}$,
the family $\{S_j(  \pi_j(x) ) \mid j \in [n]\}$ is a vertex clique cover of
$N_{H(n,q)}(x)$ and so $\theta_V(N_{H(n,q)}(x))\le n$.

Moreover, note that
$S_j(\pi_j(x)) \cap S_{j^{\prime} } (\pi_{j^{\prime}}(x)) =\{x\}$
for $j,j^{\prime}\in [n]$ where $j\not=j^{\prime}$.
Take $y_j\in S_j(\pi_j(x))\setminus\{x\}$ for each $j\in[n]$.
Then $y_1,y_2,\ldots,y_n$ are $n$ vertices of $N_{H(n,q)}(x)$
such that no two of them can be covered by a same clique
and so $\theta_V(N_{H(n,q)}(x))\ge n$.
\end{proof}

Opsut showed the following lower bound for the competition number of a graph.

\begin{Thm}[\cite{MR679638}]\label{thm:OpsutBd1}
For a graph $G$, it holds that
$k(G) \geq \text{{\rm min}} \{ \theta_V(N_{G}(v)) \mid v \in V(G) \}$.
\end{Thm}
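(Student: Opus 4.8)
The plan is to argue directly from the definition of the competition number. Suppose $k=k(G)$ and let $D$ be an acyclic digraph with $C(D)=G\cup I_k$, where $I_k$ denotes the set of $k$ added isolated vertices. Fix an acyclic ordering $v_1,v_2,\ldots,v_N$ of $D$, so that $(v_i,v_j)\in A(D)$ implies $i>j$; thus every prey of a vertex occurs earlier in the ordering than the vertex itself. The basic tool I would use is the standard observation that for each vertex $a\in V(D)$ the in-neighborhood $N^-_D(a)$ is a clique of $C(D)$: if $(u,a)$ and $(w,a)$ are both arcs of $D$, then $u$ and $w$ are adjacent in $C(D)$ by definition of the competition graph.

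First I would establish the local inequality $\theta_V(N_G(v))\le |N^+_D(v)|$ for every $v\in V(G)$. For each prey $a\in N^+_D(v)$, the set $N^-_D(a)\cap N_G(v)$ is a clique of the induced subgraph $N_G(v)$, since it is contained in the clique $N^-_D(a)$ of $C(D)$ and consists of genuine neighbors of $v$ in $G$. Moreover these cliques cover $N_G(v)$: given any neighbor $u$ of $v$ in $G$, the edge $uv$ of $G$ is an edge of $C(D)$, so $u$ and $v$ share a common prey $a$, whence $a\in N^+_D(v)$ and $u\in N^-_D(a)\cap N_G(v)$. Thus $\{N^-_D(a)\cap N_G(v)\mid a\in N^+_D(v)\}$ is a vertex clique cover of $N_G(v)$, and the inequality follows.

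Next I would select the right vertex. Let $v^\ast$ be the vertex of $V(G)$ with the smallest index in the acyclic ordering. Every prey of $v^\ast$ has a strictly smaller index, and by minimality of $v^\ast$ every such earlier vertex lies outside $V(G)$, hence belongs to $I_k$. Therefore $N^+_D(v^\ast)\subseteq I_k$, which gives $|N^+_D(v^\ast)|\le k$. Combining this with the local inequality yields
\[
\min_{v\in V(G)}\theta_V(N_G(v))\ \le\ \theta_V(N_G(v^\ast))\ \le\ |N^+_D(v^\ast)|\ \le\ k,
\]
which is exactly the claimed bound. (If $v^\ast$ is isolated in $G$, then $\theta_V(N_G(v^\ast))=0$ and the chain of inequalities still holds trivially.)

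The conceptual steps are short, so there is no deep obstacle; the main thing to get right is the bookkeeping around the acyclic ordering, specifically the argument that the preys of the first $G$-vertex are forced into the isolated set $I_k$. I would be careful to check that $N^-_D(a)\cap N_G(v^\ast)$ really is a clique of the \emph{induced} neighborhood $N_G(v^\ast)$ and not merely of $C(D)$, and that the vertices of $I_k$, being isolated, contribute nothing to the covering. These are the only places where the precise structure $C(D)=G\cup I_k$, rather than that of an arbitrary supergraph of $G$, is actually used.
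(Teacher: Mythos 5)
The paper does not prove this statement; it is quoted from Opsut's 1982 paper \cite{MR679638} as a known lower bound, so there is no in-paper proof to compare against. Your argument is correct and is essentially Opsut's original one: the family $\{N^-_D(a)\cap N_G(v)\mid a\in N^+_D(v)\}$ is a vertex clique cover of the induced neighborhood $N_G(v)$, and choosing $v^\ast$ to be the earliest $G$-vertex in an acyclic ordering forces $N^+_D(v^\ast)\subseteq I_k$, giving $\theta_V(N_G(v^\ast))\le |N^+_D(v^\ast)|\le k$; all the delicate points (that the intersections are cliques of the induced subgraph, and that isolated added vertices cannot serve as neighbors) are handled correctly.
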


\begin{Cor}\label{cor:LB}
If $n \geq 2$ and $q \geq 2$,
then $k(H(n,q)) \geq n$.
\end{Cor}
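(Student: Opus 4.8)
The plan is to derive this bound immediately by feeding Lemma~\ref{lem:locvcc} into Opsut's lower bound, Theorem~\ref{thm:OpsutBd1}. All of the genuine work has already been isolated in the preceding lemma, so the corollary itself is a one-step deduction and I do not expect any real obstacle here; the only thing to verify is that the hypotheses of the two ingredients line up.

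Concretely, I would first observe that Lemma~\ref{lem:locvcc} asserts $\theta_V(N_{H(n,q)}(x)) = n$ for \emph{every} vertex $x$ of $H(n,q)$, under exactly the standing assumptions $n \ge 2$ and $q \ge 2$ that are in force in this corollary. Since the vertex clique cover number of the neighborhood takes the constant value $n$ regardless of which vertex $x$ is chosen, the minimum of this quantity over all vertices of the graph is also equal to $n$; that is,
\[
\min\{ \theta_V(N_{H(n,q)}(v)) \mid v \in V(H(n,q)) \} = n.
\]
Then I would invoke Theorem~\ref{thm:OpsutBd1} with $G = H(n,q)$, which states that $k(G)$ is bounded below by precisely this minimum, and conclude $k(H(n,q)) \ge n$.

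If anything warrants a remark, it is only that the substantive content lives in Lemma~\ref{lem:locvcc} rather than in the corollary: the upper bound $\theta_V \le n$ there came from exhibiting the explicit vertex clique cover $\{ S_j(\pi_j(x)) \mid j \in [n] \}$ of $N_{H(n,q)}(x)$, while the matching lower bound $\theta_V \ge n$ came from choosing one neighbor $y_j \in S_j(\pi_j(x)) \setminus \{x\}$ in each of the $n$ coordinate directions and noting that, because these maximal cliques pairwise intersect only in $x$, no two of the $y_j$ can lie in a common clique. Once that lemma is in hand, the present corollary requires no further combinatorial input.
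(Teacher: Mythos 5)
Your proposal is correct and is exactly the paper's argument: the paper also proves this corollary by combining Lemma~\ref{lem:locvcc} with Theorem~\ref{thm:OpsutBd1}, noting that the minimum of $\theta_V(N_{H(n,q)}(v))$ over all vertices equals $n$. Nothing further is needed.
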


\begin{proof}
It immediately follows from
Lemma \ref{lem:locvcc} and Theorem \ref{thm:OpsutBd1}.
\end{proof}

We define a total order $\prec$ on the set $[q]^n$
as follows.
Take two distinct elements
$x=(x_1, x_2, \ldots, x_n)$ and
$y=(y_1, y_2, \ldots, y_n )$
in $[q]^n$.
Then we define $x \prec y$
if there exists $j \in [n]$
such that $x_i=y_i$ for $i \leq j-1$
and $x_j < y_j$.
The \textit{lexicographic ordering} of $[q]^n$
is the ordering $v_{1}, v_{2}, \ldots, v_{q^n}$
such that $v_1 \prec v_2 \prec \ldots \prec v_{q^n}$.


\begin{proof}[Proof of Theorem \ref{thm:H(2,q)}]
By Corollary \ref{cor:LB}, it follows that $k(H(2,q)) \ge 2$.
Now we show that  $k(H(2,q)) \le 2$.
We define a digraph $D$ as follows: 
\begin{eqnarray*}
V(D) &=& V(H(2,q)) \cup \{ z_1, z_2 \}, \\
A(D) &=&
\left(
\bigcup_{i=2}^{q} \{ (x, (1,i-1) ) \mid x \in S_1(i) \}
\right)
\cup \left(
\bigcup_{i=2}^{q} \{ (x, (i-1,q) ) \mid x \in S_2(i) \}
\right) \\
&& \cup \{ (x, z_{1}) \mid x \in S_1(1) \}
\cup \{ (x, z_{2}) \mid x \in S_2(1) \},
\end{eqnarray*}
where $S_j(i)$ with $j \in \{1,2\}$ and $i \in [q]$ is
the clique of $H(2,q)$ defined by (\ref{eq:Sjp}).
Since $\{N^-_D(v) \mid v\in V(D), |N^-_D(v)|\ge 2 \} =\cF(2,q)$,
it is easy to check that $C(D)=H(2,q) \cup \{z_1,z_2\}$.
In addition, the ordering obtained by adding $z_1, z_2$
on the head of the 
lexicographic ordering of $V(H(2,q))$
is an acyclic ordering of $D$.
To see why, take an arc $(x, y)\in A(D)$.
If $x \in S_1(1)$ or $x \in S_2(1)$
then $y$ is either $z_1$ or $z_2$.
If $x \in S_1(i)$ or $x \in S_2(i)$ for some $2 \leq i \in[q]$,
then $x=(l,i)$ or $x=(i,l)$ for some $l \in [q]$.
Since $y=(1, i-1)$ or $y=(i-1, q)$, we have $y \prec x$.
Therefore $D$ is acyclic.
Hence we have $k(H(2,q)) \leq 2$.
\end{proof}


\subsection{Proof of Theorem \ref{thm:H(3,q)}}

In this subsection,
we give a proof of Theorem \ref{thm:H(3,q)}.

\subsubsection{}{$ k(H(3,q)) \geq 6$ $(q \geq 3)$}

First, we improve the lower bound for the competition number of
a Hamming graph $H(n,q)$
given in Corollary \ref{cor:LB}
in the case where $n \geq 3$ and $q \geq 3$.

\begin{Lem}\label{lem:LH_3}
Let $n\ge 2$ and $q \ge 2$.
Let $D$ be an acyclic digraph
such that $C(D)=H(n,q) \cup I_k$
with $I_k=\{z_1, z_2, \ldots, z_k\}$.
Let $z_1, z_2, \ldots, z_k, v_1, v_2, \ldots, v_{q^n}$
be an acyclic ordering of $D$.
Let $U_i :=\{v_1, \ldots, v_i \}$ for $i \in \{1, \ldots, q^n \}$.
Then
\[
|\{S \in \cF(n,q) \mid S \cap U_i \neq \emptyset \}| \leq  k+i-1.
\]
\end{Lem}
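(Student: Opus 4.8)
The plan is to construct an injection from the set $\{S \in \cF(n,q) \mid S \cap U_i \neq \emptyset\}$ into the set $\{z_1, \ldots, z_k, v_1, \ldots, v_{i-1}\}$ consisting of the first $k+i-1$ vertices of the given acyclic ordering; the inequality then follows at once. The guiding observation is that, because the ordering is acyclic, any common prey of two vertices must appear strictly \emph{before} both of them in the ordering. Hence, if one looks at the edge joining the two earliest vertices of a clique $S$, the vertex of $D$ that covers this edge must precede every vertex of $S$, and in particular it is forced into an early segment of the ordering whenever $S$ meets $U_i$.

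First I would, for each maximal clique $S \in \cF(n,q)$ with $S \cap U_i \neq \emptyset$, let $x$ and $y$ be the two vertices of $S$ occurring earliest in the ordering (these exist since $|S| = q \geq 2$). As $xy$ is an edge of $H(n,q) \subseteq C(D)$, there is a vertex $w_S$ with $x, y \in N^-_D(w_S)$, and this common prey precedes both $x$ and $y$; since $x$ is the earliest vertex of $S$, the vertex $w_S$ precedes all of $S$. Because $|N^-_D(w_S)| \geq 2$, Lemma \ref{lem:LB2} shows $N^-_D(w_S)$ is contained in a unique maximal clique, which must be $S$ itself (it contains the edge $xy$, whose unique maximal clique is $S$). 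Thus $N^-_D(w_S) \subseteq S$, so $w_S$ determines $S$, which makes $S \mapsto w_S$ injective. Finally, since $S$ contains some $v_j$ with $j \leq i$, the earliest vertex of $S$ appears no later than $v_j$, so $w_S$ lies strictly before $v_j$ and hence among $z_1, \ldots, z_k, v_1, \ldots, v_{i-1}$.

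The step requiring the most care is the invocation of Lemma \ref{lem:LB2}, which applies only to cliques of $H(n,q)$: I must first verify that $N^-_D(w_S)$ contains none of the isolated vertices $z_t$. This is where the hypothesis $C(D) = H(n,q) \cup I_k$ enters, and it is immediate once noted: any two members of $N^-_D(w_S)$ are adjacent in $C(D)$, whereas each $z_t$ is isolated, so $N^-_D(w_S)$ is a genuine clique of $H(n,q)$ of size at least two and the lemma is legitimately applicable. The remaining work is only bookkeeping, namely confirming that the target set $\{z_1, \ldots, z_k, v_1, \ldots, v_{i-1}\}$ has exactly $k + i - 1$ elements, which gives $|\{S \in \cF(n,q) \mid S \cap U_i \neq \emptyset\}| \leq k + i - 1$.
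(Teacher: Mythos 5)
Your proof is correct and takes essentially the same approach as the paper: both arguments attach to each maximal clique $S$ meeting $U_i$ a common prey of two of its vertices, use acyclicity to force that prey into the first $k+i-1$ positions of the ordering, and invoke Lemma \ref{lem:LB2} to see that the prey's in-neighborhood determines $S$ uniquely. The only cosmetic difference is that you package this as a direct injection from the cliques into $\{z_1,\ldots,z_k,v_1,\ldots,v_{i-1}\}$, whereas the paper phrases it as a surjection from the family of in-neighborhoods $\cK_i$ onto those cliques.
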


\begin{proof}
Let
\begin{eqnarray*}
\cN(D) &:=& \{N_{D}^-(x) \mid x \in V(D), |N_{D}^-(x)| \ge 2 \}, \\
\cS_i &:=& \{ N_{D}^-(x) \mid x \in U_{i-1}\cup I_k, |N_{D}^-(x)| \ge 2 \}, \\
\cK_i &:=& \{K \in \cN(D) \mid K \cap U_{i} \neq \emptyset \}.
\end{eqnarray*}
Since $D$ is acyclic, it holds that
$\cK_i = \{ K \in \cS_i \mid K \cap U_i \neq \emptyset \}$.
Since $|\cS_i| \leq k+i-1$, it follows that
\begin{eqnarray}\label{ueeq1}
|\cK_i| = |\{K \in \cS_i \mid K \cap U_i \neq \emptyset \}|
\leq |\cS_i| \leq k+i-1.
\end{eqnarray}
For each $K \in \cK_i$, there exists a unique element
in $\mathcal{F}(n,q)$ containing $K$ by Lemma \ref{lem:LB2},
we denote it by $S_K$.
From (\ref{ueeq1}), it remains to show that
\begin{eqnarray}\label{ueeq}
|\{ S \in \cF(n,q) \mid S \cap U_{i} \neq \emptyset \}| \leq |\cK_i|.
\end{eqnarray}
Take $S \in \cF(n,q)$ such that $S \cap U_{i} \neq \emptyset$.
Then there exists a vertex $x$ in $S \cap U_i$.
Since $q \geq 2$, there exists a vertex
$y \in S \setminus \{ x \}$.
Since $C(D)=H(n,q) \cup I_k$ and the vertices $x$ and $y$ are adjacent,
there is a common prey $u$ of $x$ and $y$ in $D$.
Then $x\in N_D^-(u)\cap U_i$ and so  $N_D^-(u) \in \cK_i$.
Since $N_D^-(u)$ contains $x$ and $y$,  $S_{N_D^-(u)}$
is a maximal clique containing $x$ and $y$.
Then both $S$ and $S_{N_D^-(u)}$ are maximal cliques containing $x$ and $y$.
By Lemma \ref{lem:LB2}, we have $S=S_{N_D^-(u)}$,
which implies that
$S \in \{ S_K \mid K \in \cK_i \}$.
It follows that
\[
\{S \in \cF(n,q) \mid S \cap U_{i} \neq \emptyset \}
\subseteq \{ S_K \mid K \in \cK_i \},
\]
and together with
$| \{ S_K \mid K \in \cK_i \}| \le |\cK_i| $,
(\ref{ueeq}) holds.
Hence, the lemma holds.
\end{proof}

\begin{Lem}\label{Lem:LB33}
For $n \geq 3$ and $q \geq 3$, we have $k(H(n,q)) \geq 3n-4$.
\end{Lem}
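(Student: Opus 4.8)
The plan is to apply Lemma~\ref{lem:LH_3} only at $i=3$ and to bound its left-hand side from below by counting, vertex by vertex, how many maximal cliques of $\cF(n,q)$ first meet an initial segment of the ordering. Let $k=k(H(n,q))$ and fix an acyclic digraph $D$ with $C(D)=H(n,q)\cup I_k$, an acyclic ordering $z_1,\dots,z_k,v_1,\dots,v_{q^n}$, and put $U_i=\{v_1,\dots,v_i\}$; this makes sense because $q^n\ge 27\ge 3$. Writing $f(i):=|\{S\in\cF(n,q)\mid S\cap U_i\neq\emptyset\}|$, Lemma~\ref{lem:LH_3} gives $f(3)\le k+2$. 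Hence it suffices to prove the combinatorial lower bound $f(3)\ge 3n-2$, from which $k\ge 3n-4$ follows at once.

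First I would record the basic structural fact, immediate from the definition of $\cF(n,q)$, that every vertex $x\in[q]^n$ lies in exactly the $n$ distinct maximal cliques $S_1(\pi_1(x)),\dots,S_n(\pi_n(x))$. For $i\in\{1,2,3\}$ let $a_i$ be the number of $S\in\cF(n,q)$ with $v_i\in S$ and $S\cap U_{i-1}=\emptyset$ (the cliques ``first met'' at step $i$); since these families are pairwise disjoint and exhaust $\{S\mid S\cap U_3\neq\emptyset\}$, we get $f(3)=a_1+a_2+a_3$. Introducing the deficiencies $d_i:=n-a_i$, i.e.\ the number of maximal cliques through $v_i$ that already meet $U_{i-1}$, this reads $f(3)=3n-(d_1+d_2+d_3)$. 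Clearly $d_1=0$, so the whole proof reduces to the inequality $d_2+d_3\le 2$.

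The inequality $d_2+d_3\le 2$ is where Lemma~\ref{lem:LB2} does the work, and its verification is the one delicate point. A maximal clique counted by $d_2$ contains $\{v_1,v_2\}$, so $d_2\le 1$ by the uniqueness in Lemma~\ref{lem:LB2}, with $d_2=1$ exactly when $v_1v_2\in E(H(n,q))$. A maximal clique counted by $d_3$ contains $v_3$ together with $v_1$ or $v_2$; since there is at most one maximal clique on $\{v_1,v_3\}$ and at most one on $\{v_2,v_3\}$, we always have $d_3\le 2$. This already settles the case $v_1v_2\notin E$, where $d_2=0$. The main obstacle is the remaining case $v_1v_2\in E$, where I must improve the bound to $d_3\le 1$: if $v_3$ is adjacent to both $v_1$ and $v_2$ then $\{v_1,v_2,v_3\}$ is a clique, so by Lemma~\ref{lem:LB2} the unique maximal clique on $\{v_1,v_3\}$ and the unique one on $\{v_2,v_3\}$ coincide (both equal the unique maximal clique on $\{v_1,v_2,v_3\}$), collapsing the two potential contributions to $d_3$ into one; and if $v_3$ is adjacent to at most one of $v_1,v_2$ then trivially $d_3\le 1$. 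In every case $d_2+d_3\le 2$.

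Combining, $f(3)=3n-(d_1+d_2+d_3)\ge 3n-2$, and with $f(3)\le k+2$ from Lemma~\ref{lem:LH_3} we conclude $k(H(n,q))\ge 3n-4$. The only real care needed is the bookkeeping of the previous paragraph: one splits on whether $v_1v_2$ is an edge and on how many of $v_1,v_2$ are neighbors of $v_3$, and checks that the triangle configuration---the one situation that could a priori make both $d_2$ and $d_3$ large---is exactly the one in which the uniqueness of maximal cliques forces a collapse.
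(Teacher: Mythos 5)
Your proposal is correct and follows essentially the same route as the paper: apply Lemma~\ref{lem:LH_3} with $i=3$ and show that at least $3n-2$ maximal cliques meet $U_3$. The only difference is bookkeeping --- the paper enumerates the cliques explicitly after a case analysis on the isomorphism type of the subgraph induced by $U_3$ ($K_3$, $P_3$, $P_2\cup I_1$, $I_3$), whereas you organize the same count via the deficiencies $d_i$ and invoke Lemma~\ref{lem:LB2} to bound $d_2+d_3\le 2$; both case splits are equivalent and both arguments are sound.
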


\begin{proof}
Let $k$ be the competition number of $H(n,q)$ and
let $D$ be an acyclic digraph such that $C(D)=H(n,q) \cup I_k$
with $I_k=\{z_1, z_2, \ldots, z_k\}$.
Let $z_1$, $z_2$, $\ldots$, $z_k$, $v_1$, $v_2$, $\ldots$, $v_{q^n}$
be an acyclic ordering of $D$.
Let $ U_3 :=\{v_1, v_2, v_3 \}$.
By Lemma \ref{lem:LH_3}, it holds that
\begin{equation}
|\{S \in \cF(n,q) \mid S \cap U_3 \neq \emptyset \}| \leq k+2.
\end{equation}
In addition, it holds that
$|\{S \in \cF(n,q) \mid S \cap U_3 \neq \emptyset \}|\ge 3n-2$
whose proof will be shown in next paragraph.
Therefore, we have $3n-2 \leq k+2$, or $k \geq 3n-4$.

Now it remains to show that
$|\{S \in \cF(n,q) \mid S \cap U_3 \neq \emptyset \}|\ge 3n-2$.
Consider the subgraph of $H(n,q)$ induced by $U_3$, say $H$.
Then $H$ is isomorphic to one of the following:
\[
\text{(i)} \ K_3 \qquad
\text{(ii)} \ P_3  \qquad
\text{(iii)} \ P_2 \cup I_1 \qquad
\text{(iv)} \ I_3.
\]

\noindent
{\bf Case (i) $H \cong K_3$}:
By Lemma \ref{lem:LB2}, $U_3$ is contained in exactly one maximal clique.
Without loss of generality,
we may assume that $U_3$ is contained in
$S_1((\underbrace{1,\ldots,1}_{n-1}) )$,
and so we may also assume that
\[
U_3=\{ (\underbrace{1,1,\ldots,1}_{n}),
(2,\underbrace{1,\ldots,1}_{n-1}),
(3,\underbrace{1,\ldots,1}_{n-1}) \}.
\]
Then the family $\{S \in \cF(n,q) \mid S \cap U_3 \neq \emptyset \}$
consists of the following $3n-2$ elements:
\begin{eqnarray*}
&& S_1((\underbrace{1,\ldots,1}_{n-1})), \quad
S_j((i,\underbrace{1,\ldots,1}_{n-2})) \
(i \in \{1,2,3\}, \ j \in [n] \setminus \{1\}).
\end{eqnarray*}

\noindent
{\bf Case (ii) $H\cong P_3$}:
Without loss of generality, we may assume that
\[
U_3 =\{ (\underbrace{1,\ldots,1}_{n}),
(2,\underbrace{1,\ldots,1}_{n-1}),
(1,2,\underbrace{1,\ldots,1}_{n-2}) \}.
\]
Then the family $\{S \in \cF(n,q) \mid S \cap U_3 \neq \emptyset \}$
consists of the following $3n-2$ elements:
\begin{eqnarray*}
&& S_j((\underbrace{1,\ldots,1}_{n-1})) \
(j \in [n]), \quad
S_j( (2,\underbrace{1,\ldots,1}_{n-2})) \
(j \in [n] \setminus \{1\}), \\
&& S_j( (1,2,\underbrace{1,\ldots,1}_{n-3} )) \
(j \in [n] \setminus \{2\}).
\end{eqnarray*}

\noindent
{\bf Case (iii) $H\cong P_2\cup I_1$ or (iv) $H\cong I_3$}:
Let $v$ be an isolated vertex of $H$.
Since the $n$ cliques in $\cF(n,q)$ containing $v$
do not contain
the other vertices of $U_3$,
it is sufficient to show that
$\{S \in \cF(n,q) \mid S \cap (U_3\setminus\{v\}) \neq \emptyset \}$
has at least $2n-2$ elements.
Since,
for each vertex $u \in U_3\setminus\{v\}$,
there are $n$ cliques in $\cF(n,q)$ containing $u$ and
there is at most one clique in $\cF(n,q)$ containing the two vertices
of $U_3\setminus\{v\}$.
Thus we can conclude that
$\{S \in \cF(n,q) \mid S \cap (U_3 \setminus \{v\}) \neq \emptyset \}$
has at least $2n-1$ elements.

We complete the proof.
\end{proof}

If $n=3$, then the above lower bound gives $k(H(3,q)) \geq 5$.
In this case, however, we can improve the bound as follows.

\begin{Lem}\label{lem:LBH_3q}
For $q \geq 3$, we have $k(H(3,q)) \ge 6$.
\end{Lem}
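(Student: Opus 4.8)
The plan is to run a proof by contradiction driven entirely by Lemma~\ref{lem:LH_3}, applied for two consecutive prefix lengths. Since Lemma~\ref{Lem:LB33} with $n=3$ already gives $k(H(3,q))\ge 5$, I would assume for contradiction that $k:=k(H(3,q))=5$, fix an acyclic digraph $D$ with $C(D)=H(3,q)\cup I_5$ and an acyclic ordering $z_1,\ldots,z_5,v_1,\ldots,v_{q^3}$, and set $U_i:=\{v_1,\ldots,v_i\}$. Writing $c(W):=|\{S\in\cF(3,q)\mid S\cap W\neq\emptyset\}|$, Lemma~\ref{lem:LH_3} then reads $c(U_i)\le k+i-1=i+4$ for every $i$. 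The whole argument is to show that this is impossible for $i=4$ and $i=5$ simultaneously.

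The first thing I would establish is an exact formula for $c(W)$. Two maximal cliques coming from different ``directions'' $j\neq j'$ meet in at most one vertex and hence (as $q\ge 3$) are distinct, while the direction-$j$ cliques meeting $W$ are exactly the fibres $\pi_j^{-1}(p)$ with $p\in\pi_j(W)$; therefore
\[
c(W)=|\pi_1(W)|+|\pi_2(W)|+|\pi_3(W)|.
\]
For $|W|=4$ each term is at most $4$, and since $|\pi_1(W)|\,|\pi_2(W)|\,|\pi_3(W)|\ge |W|^2=16$ (the discrete Loomis--Whitney inequality, or a direct count of how four points spread over the fibres), minimizing the sum subject to product $\ge 16$ forces $c(W)\ge 8$, with equality only when $(|\pi_1(W)|,|\pi_2(W)|,|\pi_3(W)|)$ is a permutation of $(2,2,4)$. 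I would then check that this profile pins $W$ down completely: it forces all four points into one layer $x_j=\text{const}$ and into a $2\times2$ pattern in the remaining two coordinates, i.e.\ $W$ induces a $4$-cycle lying in a single coordinate plane. Applying Lemma~\ref{lem:LH_3} at $i=4$ gives $c(U_4)\le 8$, so equality holds and, after permuting coordinates and relabelling $[q]$, we may assume $U_4=\{a,b\}\times\{a',b'\}\times\{c\}$ (a ``square'').

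The crux is to kill this square configuration, which is exactly the point where a single application of the bound is powerless: the square meets the $i=4$ inequality with equality, so no contradiction is visible yet. The new idea is to bring in the fifth vertex $v_5$ (it exists since $q^3\ge 27$) and argue that it can share almost nothing with the square. Concretely, a direction-$j$ clique through $v_5$ meets $U_4$ only if $v_5$ agrees with some square vertex in the two coordinates other than $j$; imposing this in two distinct directions pins all three coordinates of $v_5$ to square-values, which would force $v_5\in U_4$, impossible. Hence at least two of the three maximal cliques through $v_5$ avoid $U_4$; these are genuinely new cliques (they meet $U_5$ but not $U_4$, so they are not among the $c(U_4)$ cliques and are pairwise distinct), giving $c(U_5)\ge c(U_4)+2=10$. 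This contradicts $c(U_5)\le k+4=9$ from Lemma~\ref{lem:LH_3} at $i=5$, so $k=5$ is impossible and $k(H(3,q))\ge 6$.

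I expect the formula $c(W)=\sum_j|\pi_j(W)|$ and the $i=5$ extension to be routine once set up; the main obstacle is the equality analysis at $i=4$, namely proving that $c(U_4)=8$ forces the precise ``coplanar $2\times2$ square'' structure (ruling out in particular the near-miss profile $(2,3,3)$, which has product $18\ge 16$ but, as one verifies, is not realizable by grid points). Getting that classification clean is what makes the subsequent one-more-vertex step go through, and it is the only step requiring genuine care rather than bookkeeping.
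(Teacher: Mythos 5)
Your argument is correct, and its skeleton is the paper's: assume $k(H(3,q))=5$, apply Lemma~\ref{lem:LH_3} at $i=4$ to force the first four non-isolated vertices into an induced $C_4$ (a coplanar $2\times 2$ square), then combine the $i=5$ bound with a count of the maximal cliques through $v_5$ to reach a contradiction. Where you genuinely diverge is in how the two key counting steps are executed. For the $i=4$ step the paper enumerates all eleven isomorphism types of graphs on four vertices, discards $K_{1,1,2}$ via Lemma~\ref{lem:LB2}, and tabulates $|\cA_1|$ case by case, finding that only $C_4$ attains the required value $8$; you instead use the identity $c(W)=|\pi_1(W)|+|\pi_2(W)|+|\pi_3(W)|$ together with Loomis--Whitney and an integer optimization. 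The two steps you deferred do check out: the profile $(2,3,3)$ is unrealizable because if, say, $|\pi_1(W)|=2$ then $W$ splits into two direction-$1$ fibres (sizes $1{+}3$ or $2{+}2$), and in either case demanding that both remaining projections have size $3$ forces the two fibre base points in $[q]^2$ to coincide; and the surviving profile $(2,2,4)$ does pin $W$ to a coplanar square. Your endgame is the paper's inclusion--exclusion run in the opposite direction: the paper deduces $|\cA_1\cap\cA_2|\ge 2$ from $|\cA_1\cup\cA_2|\le 9$ and then extracts a coordinate contradiction from two such shared cliques, while you show directly that at most one maximal clique through $v_5$ can meet the square (agreement in two directions would put $v_5$ inside $U_4$), so $|\cA_1\cup\cA_2|\ge 10>9$. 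The projection formulation buys a cleaner, less case-ridden proof that would scale more gracefully to $n\ge 4$; the paper's enumeration is more elementary and requires no extremal inequality. I see no gap.
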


\begin{proof}
By Lemma \ref{Lem:LB33}, we have $k(H(3,q)) \ge 5$.
Suppose that $k(H(3,q)) = 5$.
Then there exists an acyclic digraph $D$
such that $C(D)=H(3,q) \cup I_5$
with $I_5=\{z_1, z_2, \ldots, z_5\}$.
Let $z_1, z_2, \ldots, z_5, v_1, v_2, \ldots, v_{q^3}$
be an acyclic ordering of $D$.
Let $U_4 :=\{v_1, v_2, v_3, v_4 \}$.
For convenience, let
\[
\cA_1 := \{S \in \cF(3,q) \mid S \cap U_4 \neq \emptyset \}, \quad
\cA_2 := \{S \in \cF(3,q) \mid v_5 \in S \}.
\]

Now we consider
the subgraph $G$ of $H(3,q)$ induced by $U_4$.
Any graph on $4$ vertices is isomorphic to one of
the following graphs: 
\[
\begin{array}{llll}
\text{(i)} \ K_4 &
\text{(ii)} \ K_{1,1,2} &
\text{(iii)} \ K_4 - E(P_3) &
\text{(iv)} \ C_4 \\
\text{(v)} \ P_4 &
\text{(vi)} \ K_{1,3} &
\text{(vii)} \ K_3 \cup I_1 &
\text{(viii)} \ K_2 \cup K_2 \\
\text{(ix)} \ P_3 \cup I_1 &
\text{(x)} \ K_2 \cup I_2 &
\text{(xi)} \ I_4 &
\end{array}
\]
Since $H(3,q)$ does not contain
an induced subgraph isomorphic to $K_{1,1,2}$ by Lemma \ref{lem:LB2},
$G$ is one of the above graphs except (ii).
For each cases,
the number $|\cA_1|$
is given as follows: 
\[
\begin{array}{llll}
\text{(i)}  \ 9 &
\text{(ii)}  \ - &
\text{(iii)}  \ 9 &
\text{(iv)}  \ 8 \\
\text{(v)}  \ 9 &  \text{(vi)}  \ 9 &
\text{(vii)}  \ 10 &
\text{(viii)}  \ 10 \\
\text{(ix)}  \ 10 &
\text{(x)}  \ 11 &
\text{(xi)}  \ 12 &
\end{array} \]
By Lemma \ref{lem:LH_3}, we have
$|\cA_1 | \leq 8$.
Therefore $G \cong C_4$ and so $|\cA_1| = 8$.
Since each vertex of $H(3,q)$ is contained in exactly
$3$ cliques in $\cF(3,q)$, $|\cA_2|=3$.
From the fact that
\[
\cA_1 \cup \cA_2=\{S \in \cF(3,q) \mid S \cap
(U_4 \cup \{v_5\}) \neq \emptyset \},
\]
it holds that $|\cA_1 \cup \cA_2| \leq 9$ by Lemma \ref{lem:LH_3}.
Since $|\cA_1|=8$, $|\cA_2|=3$, and $|\cA_1 \cup \cA_2| \leq 9$,
we have
$|\cA_1\cap \cA_2|=|\cA_1|+|\cA_2|-|\cA_1\cup \cA_2|
\geq 8+3-9 = 2$.

Take two distinct cliques
$S, S^{\prime} \in \cA_1 \cap \cA_2.$
Then $S \cap U_4 \neq \emptyset$,
$S^{\prime}\cap U_4\not=\emptyset$
and so take $x\in S\cap U_4$ and
$y\in S^{\prime}\cap U_4$.
If $x=y$ or $x$ and $y$ are adjacent,
then $S=S^{\prime}$ by Lemma \ref{lem:LB2}.
Therefore $x$ and $y$ are not adjacent.
Since $G \cong C_4$,
without loss of generality,
we may assume that
\[
U_4 = \{ (1,1,1),(1,1,2), (1,2,2), (1,2,1) \}, \quad
x = (1,1,1), \quad y = (1,2,2).
\]
Since $x$ and $v_5$ are adjacent,
one of the following holds:
\begin{eqnarray}
\begin{split}
& \pi_1(x)=(1,1)=\pi_1(v_5), \quad
\pi_2(x)=(1,1)=\pi_2(v_5), \\
& \pi_3(x)=(1,1)=\pi_3(v_5).
\end{split} \label{eqL6} 
\end{eqnarray}
Since $y$ and $v_5$ are adjacent,
one of the following holds:
\begin{eqnarray}
\begin{split}
& \pi_1(y)=(2,2)=\pi_1(v_5), \quad
\pi_2(y)=(1,2)=\pi_2(v_5), \\
&\pi_3(y)=(1,2)=\pi_3(v_5). 
\end{split} \label{eqL7} 
\end{eqnarray}
However, it is impossible that
$v_5$ satisfies both one of (\ref{eqL6}) and one of (\ref{eqL7})
since $v_5 \not\in U_4$.
We reach a contradiction.
Hence we conclude $k(H(3,q)) \geq 6$.
\end{proof}

\subsubsection{}{$ k(H(3,q)) \leq 6$ $(q \geq 3)$}

Next, we show the upper bound $k(H(3,q)) \leq 6$ for $q \geq 3$.
To show the upper bound,
we introduce a graph
$K_{q_1} \Box K_{q_2} \Box K_{q_3}$
as an extension of $H(3,q)$.

For graphs $G$ and $H$,
the {\it Cartesian product} $G \Box H$ of $G$ and $H$ is
the graph which has the vertex set $V(G)\times V(H)$
and has an edge between two vertices $(g,h)$ and $(g',h')$
if and only if
$gg'\in E(G)$ and $h=h'$, or $g=g'$ and $hh'\in E(H)$.
Note that the Cartesian product of $n$ complete graphs $K_q$ of size $q$
is
the Hamming graph $H(n,q)$.

Let $q_1, q_2, q_3 \geq 2$ be integers and we consider the graph
$K_{q_1} \Box K_{q_2} \Box K_{q_3}$.
Define
\begin{eqnarray*}
\pi_1 : [q_1] \times [q_2]\times [q_3] \to [q_2] \times [q_3],
&\ & (x_1, x_2, x_3) \mapsto (x_2, x_3), \\
\pi_2 : [q_1] \times [q_2]\times [q_3] \to [q_1] \times [q_3],
&\ & (x_1, x_2, x_3) \mapsto (x_1, x_3), \\
\pi_3 : [q_1] \times [q_2]\times [q_3] \to [q_1] \times [q_2],
&\ & (x_1, x_2, x_3) \mapsto (x_1, x_2).
\end{eqnarray*}
For
$p_1 \in [q_1]$, $p_2 \in [q_2]$, and $p_3 \in [q_3]$, let
\begin{eqnarray*}
S_1((p_2,p_3)) &:=& \{ x \in [q_1] \times [q_2] \times [q_3]
\mid \pi_1(x) = (p_2,p_3) \}, \\
S_2((p_1,p_3)) &:=& \{ x \in [q_1] \times [q_2] \times [q_3]
\mid \pi_2(x) = (p_1,p_3) \}, \\
S_3((p_1,p_2)) &:=& \{ x \in [q_1] \times [q_2] \times [q_3]
\mid \pi_3(x) = (p_1,p_2) \}.
\end{eqnarray*}
Note that $S_1((p_2,p_3))$, $S_2((p_1,p_3))$, and $S_3((p_1,p_2))$
are maximal cliques of $K_{q_1} \Box K_{q_2} \Box K_{q_3}$.
We denote
the set of all maximal cliques
$S_1((p_2,p_3))$, $S_2((p_1,p_3))$ and $S_3((p_1,p_2))$
by $\cF_{(q_1,q_2,q_3)}$.
Then $\cF_{(q_1,q_2,q_3)}$ is an edge clique cover of
$K_{q_1} \Box K_{q_2} \Box K_{q_3}$.

\begin{Lem} \label{lem:UH_3q}
For $q_1,q_2,q_3 \geq 2$,
there exists an acyclic digraph $D$ such that
$C(D) = (K_{q_1} \Box K_{q_2} \Box K_{q_3}) \cup I_6$
and
\[
\{ N_D^-(v) \mid v \in V(D), |N_D^-(v)| \geq 2 \}
= \cF_{(q_1,q_2,q_3)}.
\]
Consequently, we have
$k(K_{q_1} \Box K_{q_2} \Box K_{q_3}) \leq 6$.
\end{Lem}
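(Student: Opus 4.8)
The plan is to build the digraph $D$ directly, in the spirit of the proof of Theorem \ref{thm:H(2,q)}: I will prescribe an acyclic ordering of the grid vertices, prepend the six vertices $z_1,\dots,z_6$ at its head, and then turn every maximal clique of $\cF_{(q_1,q_2,q_3)}$ into the in-neighborhood of a single ``prey''. Concretely, I would choose an ordering $v_1\prec v_2\prec\cdots\prec v_N$ of $[q_1]\times[q_2]\times[q_3]$ (with $N=q_1q_2q_3$) together with an injective assignment $S\mapsto p(S)$ that sends each clique $S\in\cF_{(q_1,q_2,q_3)}$ either to a grid vertex strictly preceding every vertex of $S$, or to one of $z_1,\dots,z_6$; the arcs of $D$ are then $(x,p(S))$ for all $x\in S$ and all $S$. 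Granting such an assignment, the three required properties are routine: since $p$ is injective and $\cF_{(q_1,q_2,q_3)}$ is an edge clique cover, the in-neighborhoods of size at least $2$ are exactly the cliques of $\cF_{(q_1,q_2,q_3)}$, whence $C(D)$ restricted to the grid equals $K_{q_1}\Box K_{q_2}\Box K_{q_3}$; the vertices $z_1,\dots,z_6$ receive no out-arcs and hence stay isolated in $C(D)$; and placing the $z_i$ before $v_1$ and every prey before its predators makes the ordering acyclic.

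So the whole content is to produce the ordering and the assignment with only six ``leftover'' cliques. I would first reduce this to a counting condition on the ordering alone. By a Hall/greedy matching argument (the available grid preys for a clique form a prefix of the ordering, so the prey sets are nested and Hall's condition collapses to prefixes), the desired assignment with six isolated vertices exists if and only if, for every prefix $U_i=\{v_1,\dots,v_i\}$,
\[
|\{S\in\cF_{(q_1,q_2,q_3)}\mid S\cap U_i\neq\emptyset\}|\le i+5 .
\]
This is exactly the converse direction of the estimate in Lemma \ref{lem:LH_3}. It is convenient to rephrase it through the three projections $\pi_1,\pi_2,\pi_3$: a clique $S$ meets $U_i$ iff the corresponding projection value is attained by some $v_j$ with $j\le i$, so the left-hand side equals the total number of distinct values of $\pi_1$, of $\pi_2$, and of $\pi_3$ realized by $v_1,\dots,v_i$. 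Writing $d_t(i)$ for the number of distinct $\pi_t$-values seen among the first $i$ vertices, the task becomes: order the grid so that $d_1(i)+d_2(i)+d_3(i)\le i+5$ for all $i$.

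The construction of this ordering is the crux, and the naive choice fails: under the lexicographic ordering the first ``pencil'' $(1,1,\ast)$ makes two projection values fresh at every step, so $d_1+d_2+d_3-i$ grows linearly in $q_3$. Instead I would fill the grid by nested boxes $[m]\times[m]\times[m]$ (truncated once a coordinate reaches its bound), and within each shell interleave the ``cheap'' vertices --- those whose three projection values have all already appeared, so that $d_1+d_2+d_3$ does not grow --- ahead of the few ``expensive'' vertices along the coordinate axes, which make two values fresh at once. A full shell contributes far fewer new projection values than new vertices, so the quantity $d_1(i)+d_2(i)+d_3(i)-i$ decreases across each shell; the real work is to verify that, with the right intra-shell order, it never exceeds $5$ at any intermediate step. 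The tightest case is the very beginning: already the $2\times2\times2$ corner attains the value $5$ (consistent with the lower bound $k(H(3,2))=6$), leaving no slack, and one must also handle the bookkeeping once the smallest of $q_1,q_2,q_3$ is exhausted and the shells become rectangular. Once the bound $d_1(i)+d_2(i)+d_3(i)\le i+5$ is established for all $i$, the matching exists, the digraph $D$ is realized, and $k(K_{q_1}\Box K_{q_2}\Box K_{q_3})\le 6$ follows.
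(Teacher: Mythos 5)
Your reduction is correct and is genuinely different from the paper's route: the paper proves the upper bound by induction on $q_1+q_2+q_3$, gluing an explicit digraph on the new slab $\{q_1\}\times[q_2]\times[q_3]$ (built exactly as in the proof of Theorem~\ref{thm:H(2,q)}) onto the digraph given by the induction hypothesis, and feeding each new vertex $(q_1,p_2,p_3)$ to the prey already assigned to the line $S_1((p_2,p_3))\setminus\{(q_1,p_2,p_3)\}$. Your alternative --- prescribe one global ordering, observe that each clique's admissible prey set is a prefix, and invoke Hall's theorem so that the whole problem collapses to the single inequality $d_1(i)+d_2(i)+d_3(i)\le i+5$ for all prefixes --- is a clean and correct reformulation, and it has the conceptual merit of exhibiting the upper bound as the exact converse of Lemma~\ref{lem:LH_3}.

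However, there is a genuine gap at the crux: you never actually produce an ordering satisfying $d_1(i)+d_2(i)+d_3(i)\le i+5$ for every $i$, nor verify that your nested-shell scheme does. You yourself flag this ("the real work is to verify that, with the right intra-shell order, it never exceeds $5$"), and the verification is not routine: the bound is met with equality already inside the $2\times2\times2$ corner (at $i=5,6,7$ one gets exactly $i+5$), and within each later shell the "expensive" vertices (those introducing two fresh projection values, such as $(m,1,1)$, $(1,m,1)$, $(1,1,m)$) push the deficit back up to $5$, so the intra-shell order must be specified precisely and checked step by step, including the degenerate rectangular shells once $\min(q_1,q_2,q_3)$ is exhausted. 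The phrase "interleave the cheap vertices ahead of the expensive ones" does not pin down an order, and without an explicit order plus a prefix-by-prefix verification (or an inductive invariant playing that role, which is essentially what the paper's slab-by-slab induction supplies), the existence of the matching --- and hence of $D$ --- is not established. To complete your argument you would need either to write down the shell ordering concretely and prove the inequality by induction on shells with the running deficit as the invariant, or to fall back on a layer-by-layer construction of the kind the paper uses.
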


\begin{proof}
For any digraph $D$, we define
$\cN(D):= \{ N_D^-(v) \mid v \in V(D),| N_D^-(v)| \ge 2 \}$.
We prove the lemma by induction on $m = q_1 + q_2 + q_3$.
Since ${q_1}, {q_2}, {q_3}  \ge 2$, we have $m\ge 6$.
Suppose $m=6$, i.e., $q_1=q_2=q_3=2$.
Note that $K_{2} \Box K_{2} \Box K_{2} = H(3,2)$.
Since $H(3,2)$ is a triangle-free graph, there exists an
acyclic digraph $D$ such that $C(D)=H(3,2) \cup I_6$ and that
$N_D^-(v)$ is either the empty set or a maximum clique in $H(3,2)$
for each vertex $v \in V(D)$ (see Figure \ref{fig1} for an
illustration of such a digraph).
Thus the statement is true for $m=6$.

\begin{figure}
\begin{center}
\psfrag{A}{\small(1,2,2)}
\psfrag{B}{\small(2,2,2)}
\psfrag{C}{\small(1,1,2)}
\psfrag{D}{\small(2,1,2)}
\psfrag{E}{\small(1,2,1)}
\psfrag{F}{\small(2,2,1)}
\psfrag{G}{\small(1,1,1)}
\psfrag{H}{\small(2,1,1)}
\psfrag{I}{$H(3,2)$}
\psfrag{J}{$D$}
\includegraphics[width=310pt]{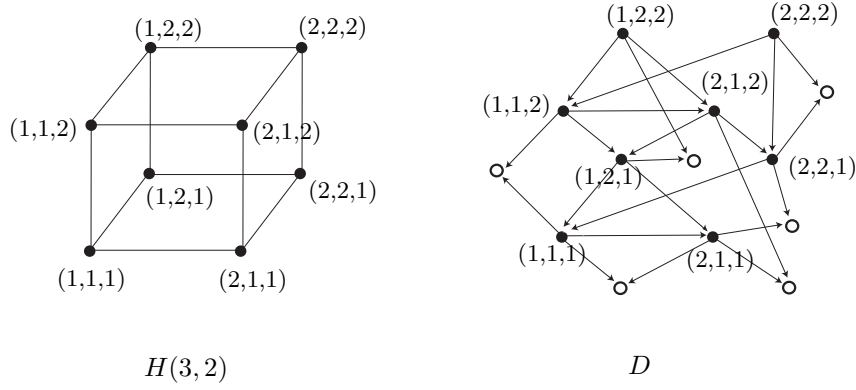}\\
  \caption{The Hamming graph $H(3,2)$ and an acyclic digraph $D$ satisfying
$C(D)=H(3,2) \cup I_6$}
\label{fig1}
\end{center}
\end{figure}

Suppose that the statement is true for $m= q_1 + q_2 + q_3$
where $m \geq 6$.
Consider a graph $K_{q_1} \Box K_{q_2} \Box K_{q_3}$
such that $m+1 = q_1+q_2+q_3$.
Since $q_1+q_2+q_3 > 6$,
at least one of $q_1$, $q_2$, or $q_3$ is
greater than $2$.
Without loss of generality, we may assume that $q_1>2$.
Now we consider the graph $K_{q_1-1} \Box K_{q_2} \Box K_{q_3}$
which is a subgraph of $K_{q_1} \Box K_{q_2} \Box K_{q_3}$.
Then by the induction hypothesis,
there exists an acyclic digraph $D_0$ such that
$C(D_0)=(K_{q_1-1} \Box K_{q_2} \Box K_{q_3}) \cup I_6$ and
\begin{equation}\label{eeqq}
\cN(D_0)=\cF_{(q_1 -1,q_2,q_3)}.
\end{equation}
Let $v_1,v_2,\ldots, v_{q_1q_2q_3 - q_2q_3 + 6}$
be an acyclic ordering of $D_0$.
For convenience,
let $w_1:=v_{q_1q_2q_3-q_2q_3+5}$ and $w_2:=v_{q_1q_2q_3-q_2q_3+6}$.

Let $H^*$ be the subgraph of $K_{q_1} \Box K_{q_2} \Box K_{q_3}$ induced by
\begin{eqnarray*}
V^*&:=& V(K_{q_1} \Box K_{q_2} \Box K_{q_3})
- V(K_{q_1-1} \Box K_{q_2} \Box K_{q_3}) \\
&=& \{(q_1, p_2, p_3) \mid p_2 \in [q_2], p_3 \in [q_3] \}.
\end{eqnarray*}
Now we define a digraph $D_1$ as follows: 
\begin{eqnarray*}
V(D_1) &=& V^* \cup \{ w_1, w_2 \}, \\
A(D_1) &=& \left( \bigcup_{i=2}^{q_3}
\{ (x, (q_1,1,i-1) ) \mid x \in S_2((q_1,i)) \}  \right) \\
&& \cup \left( \bigcup_{i=2}^{q_2}
\{ (x, (q_1,i-1,q) ) \mid x \in S_3((q_1,i)) \} \right) \\
&& \cup \ \{ (x, w_1 ) \mid x \in S_2((q_1,1)) \}
\ \cup \ \{ (x, w_2) \mid x \in S_3((q_1,1)) \}.
\end{eqnarray*}
The ordering obtained by adding $w_1,w_2$ on the head of the
lexicographic ordering of $V^*$ is an acyclic ordering of $D_1$,
and let $ w_1, w_2, \ldots, w_{q_2q_3+2}$ be the ordering.
In addition,
\begin{equation}\label{eeqq2}
\cN(D_1) =
\{ S_2((q_1,i)) \mid i \in [q_3] \}
\cup
\{ S_3((q_1,i')) \mid i' \in [q_2] \}.
\end{equation}
Therefore $D_1$ is an acyclic digraph such that
$C(D_1)=H^* \cup \{ w_1, w_2 \}$.

Note that, for $(p_2,p_3) \in [q_2] \times [q_3]$,
the clique in $K_{q_1} \Box K_{q_2} \Box K_{q_3}$ obtained by deleting
the vertex $(q_1,p_2,p_3)$ from an element
$S_1((p_2,p_3))$ of $K_{q_1} \Box K_{q_2} \Box K_{q_3}$ is
a maximal clique of $K_{q_1-1} \Box K_{q_2} \Box K_{q_3}$.
Then by (\ref{eeqq}), for each $(p_2,p_3) \in [q_2] \times [q_3]$,
the set
$\{ v \in V(D_0) \mid N_{D_0}^-(v) =
S_1((p_2,p_3)) \setminus \{ (q_1,p_2,p_3) \} \}$ is not empty,
and so we take an element $y_{(p_2,p_3)}$ of this set.

Now we define a digraph $D$ as follows: 
\begin{eqnarray*}
V(D)&=&V(K_{q_1} \Box K_{q_2} \Box K_{q_3})\cup I_6 \\
A(D)&=& A(D_0) \cup A(D_1)
\cup \left\{ ( (q_1,p_2,p_3), y_{(p_2,p_3)} )
\mid (p_2,p_3) \in [q_2] \times [q_3]
\right\}
\end{eqnarray*}
Note that since $N_{D_0}^-(w_1)=N_{D_0}^-(w_2)=\emptyset$,
\begin{equation}\label{eeqq1}
\{ N_{D}^-(w_1), N_{D}^-(w_2) \} = \{ S_2((q_1,1)), S_3((q_1,1)) \}.
\end{equation}
By the definition of $D$
and (\ref{eeqq}), (\ref{eeqq2}), and (\ref{eeqq1}),
we can conclude that
$\cN(D) = \cF_{(q_1,q_2,q_3)}$ and so $E(C(D))
= E(K_{q_1} \Box K_{q_2} \Box K_{q_3})$.
Thus, $C(D) = (K_{q_1} \Box K_{q_2} \Box K_{q_3}) \cup I_6$.
Then the ordering
\begin{eqnarray}\label{order}
v_1, v_2, \ldots, v_{q_1q_2q_3-q_2q_3+6}, w_3, w_4, \ldots, w_{q_2q_3+2}
\end{eqnarray}
of the vertices of $D$ 
is an acyclic ordering. 
To see this, take an arc $a=(x,y)\in A(D)$.
If $a \in A(D_0)\cup A(D_1)$, 
then $y$ 
appears before 
$x$
in (\ref{order}), 
since $D_0$ and $D_1$ are acyclic.
If $a\not \in A(D_0)\cup A(D_1)$
then $x \in \{w_3,w_4,\ldots,w_{q_2q_3+2} \}$ and
$y\in \{v_1,v_2,\ldots,v_{q_1q_2q_3-q_2q_3+6}\}$, 
thus $y$ 
appears before 
$x$ in (\ref{order}).
Thus the digraph $D$ is acyclic.
Hence the lemma holds.
\end{proof}

\begin{proof}[Proof of Theorem \ref{thm:H(3,q)}]
By Lemma \ref{lem:LBH_3q}, we have $k(H(3,q)) \geq 6$ for $q \geq 3$.
By Lemma \ref{lem:UH_3q}, we have $k(H(3,q))
=k(K_{q} \Box K_{q} \Box K_{q}) \leq 6$ for $q \geq 3$.
Hence Theorem \ref{thm:H(3,q)} holds.
\end{proof}

\section{Concluding Remarks}

In this paper, we gave the exact values of
the competition numbers of Hamming graphs
with diameter $2$ or $3$.

We conclude this paper with leaving the following
questions for further study: 
\begin{itemize}
\item{}
What is the competition number of a
Hamming graph $H(4,q)$ with diameter $4$ for $q \geq 3$?

\item{}
What is the competition number of a ternary
Hamming graph $H(n,3)$ for $n \geq 4$?

\item{}
Give the exact values or a good bound for the competition numbers of
Hamming graphs $H(n,q)$.
\end{itemize}



\begin{thebibliography}{99}

\bibitem{MR2181040}
{H. H. Cho and S. -R. Kim}:
{The competition number of a graph having exactly one hole},
{\it Discrete Mathematics} {\bf 303} (2005) 32--41.

\bibitem{Cohen1}
{J. E. Cohen}:
{Interval graphs and food webs: a finding and a problem},
{\it Document 17696-PR}, RAND Corporation,
Santa Monica, CA (1968).

\bibitem{Cohen2}
{J. E. Cohen}:
{\it Food webs and Niche space},
Princeton University Press, Princeton, NJ (1978).

\bibitem{kimsu} 
{S. -R. Kim}:
The Competition Number and Its Variants,
in {\it Quo Vadis, Graph Theory?},
{The competition number and its variants},
in {\it Quo vadis, graph theory?}
(J. Gimbel, J. W. Kennedy, and L. V. Quintas, eds.),
{\em Annals of Discrete Mathematics} {\bf 55},
North-Holland, Amsterdam (1993) 313--326.

\bibitem{MR1833332}
{S. -R. Kim}:
{On competition graphs and competition numbers}, (in Korean),
{\it Communications of the Korean Mathematical Society} {\bf 16} (2001) 1--24.

\bibitem{MR2176262}
{S. -R. Kim}:
{Graphs with one hole and competition number one},
{\it Journal of the Korean Mathematical Society} {\bf 42} (2005) 1251--1264.

\bibitem{LKKS}
{S. -R. Kim, J. Y. Lee, and Y. Sano}:
{The competition number of a graph whose holes do not overlap much},
{\it Discrete Applied Mathematics} {\bf 158} (2010) 1456--1460.

\bibitem{KPS}
{S. -R. Kim, B. Park, and Y. Sano}:
{The competition numbers of Johnson graphs},
{\it Discussiones Mathematicae Graph Theory} {\bf 30} (2010) 449--459.

\bibitem{kr}
{S. -R. Kim and F. S. Roberts}:
Competition numbers of graphs with a small number of triangles,
{\it Discrete Applied Mathematics} {\bf 78} (1997) 153--162.

\bibitem{KimSano}
{S. -R. Kim and Y. Sano}:
{The competition numbers of complete tripartite graphs},
{\it Discrete Applied Mathematics} {\bf 156} (2008) 3522--3524.

\bibitem{twoholes}
{J. Y. Lee, S. -R. Kim, S. -J. Kim, and Y. Sano}:
{The competition number of a graph with exactly two holes},
{\it Ars Combinatoria} {\bf 95} (2010) 45--54.

\bibitem{MR679638}
{R. J. Opsut}:
{On the computation of the competition number of a graph},
{\it SIAM Journal on Algebraic and Discrete Methods} {\bf 3} (1982)
420--428.

\bibitem{PKS1}
{B. Park, S. -R. Kim, and Y. Sano}:
{On competition numbers of complete multipartite graphs with
partite sets of equal size},
{\it preprint} {\bf RIMS-1644} October 2008.
(http://www.kurims.kyoto-u.ac.jp/preprint/file/RIMS1644.pdf)

\bibitem{PKS2}
{B. Park, S. -R. Kim, and Y. Sano}:
{The competition numbers of complete multipartite graphs
and mutually orthogonal Latin squares},
{\it Discrete Mathematics} {\bf 309} (2009) 6464--6469.

\bibitem{MR0504054}
{F. S. Roberts}:
{Food webs, competition graphs, and the boxicity of ecological phase space},
{\it Theory and applications of graphs (Proc. Internat. Conf.,
Western Mich. Univ., Kalamazoo, Mich., 1976)} {\bf } (1978)
477--490.

\bibitem{regpoly}
{Y. Sano}:
{The competition numbers of regular polyhedra},
{\it Congressus Numerantium} {\bf 198} (2009) 211--219.

\bibitem{WL}
{Y. Wu and J. Lu}:
{Dimension-$2$ poset competition numbers and dimension-$2$ poset double
competition numbers},
{\it Discrete Applied Mathematics} {\bf 158} (2010) 706--717.

\end{thebibliography}
\end{document}